\renewcommand{\algorithmcfname}{ALGORITHM}
\newtheorem{theorem}{Theorem}
\begin{document}

\title[On Solving L-SR1 Trust-Region Subproblems]
      {On Solving L-SR1 Trust-Region Subproblems}


\author[J. Brust]{Johannes Brust}
\email{jbrust@ucmerced.edu}
\address{Applied Mathematics, University of California, Merced, Merced, CA 95343}

\author[J. Erway]{Jennifer B. Erway}
\email{erwayjb@wfu.edu}
\address{Department of Mathematics, Wake Forest University, Winston-Salem, NC 27109}

\author[R. Marcia] {Roummel F. Marcia}
\email{rmarcia@ucmerced.edu}
\address{Applied Mathematics, University of California, Merced, Merced, CA 95343}

\thanks{J.~B. Erway is supported in part by National Science Foundation grant
CMMI-1334042.}
\thanks{R.~F. Marcia is supported in part by National Science Foundation grant
CMMI-1333326.}

\date{\today}

\keywords{Large-scale unconstrained optimization, trust-region methods,
  limited-memory quasi-Newton methods, L-BFGS}

\begin{abstract}
  In this article, we consider solvers for large-scale trust-region
  subproblems when the quadratic model is defined by a limited-memory
  symmetric rank-one (L-SR1) quasi-Newton matrix.  We propose a solver that
  exploits the compact representation of L-SR1 matrices.  Our approach
  makes use of both an orthonormal basis for the eigenspace of the L-SR1
  matrix and the Sherman-Morrison-Woodbury formula to compute global
  solutions to trust-region subproblems.   
  To compute the optimal Lagrange multiplier for the trust-region
  constraint, we use Newton's method with a judicious initial guess that
  does not require safeguarding.  A crucial property of this solver
  is that it is able to compute high-accuracy solutions even in the
  so-called \emph{hard case}.  Additionally, the optimal solution is
  determined directly by formula, not iteratively.  Numerical experiments
  demonstrate the effectiveness of this solver.
\end{abstract}

\maketitle

\newcommand{\mgap}{\;\;}
\newcommand{\bgap}{\;\;\;}
\newcommand{\qDef}{{\mathcal Q}}
\newcommand{\defined}{\mathop{\,{\scriptstyle\stackrel{\triangle}{=}}}\,}
\newcommand{\diag}{\text{diag}}
\renewcommand{\algorithmcfname}{ALGORITHM}
\makeatletter
\def\BFGS{{\small BFGS}}
\def\LBFGS{{\small L-BFGS}}
\def\LSR{{\small L-SR1}}
\def\SR{{\small SR1}}
\def\CG{{\small CG}}
\def\DFP{{\small DFP}}
\def\OBS{{\small OBS}}
\def\SSM{{\small SSM}}
\def\MSS{{\small MSS}}
\def\LSTRS{{\small LSTRS}}
\def\MATLAB{{\small MATLAB}}

\newcommand{\minimize}[1]{{\displaystyle\minim_{#1}}}
\newcommand{\minim}{\mathop{\operator@font{minimize}}}
\newcommand{\subject}{\mathop{\operator@font{subject\ to}}}  
\newcommand{\words}[1]{\mgap\text{#1}\mgap}
\makeatother

\pagestyle{myheadings}
\thispagestyle{plain}
\markboth{J. BRUST, J. B. ERWAY, AND R. F. MARCIA}{ON SOLVING L-SR1 TRUST-REGION SUBPROBLEMS}

\section{Introduction} \label{intro}
In this article, we describe a method for minimizing a
quadratic function defined by a limited-memory symmetric rank-one (\LSR) matrix 
subject to a two-norm constraint, i.e., for a given $x_k$,
\begin{equation} \label{eqn-trustProblem}
  \minimize{p\in\Re^n}\mgap\qDef(p) \defined g^Tp + \frac{1}{2} p^TB p
   \bgap\subject \mgap \|p\|_2 \le \delta,
\end{equation}
where $g\defined\nabla f(x_k)$, $B$ is an \LSR{} approximation to
$\nabla^2 f(x_k)$, and $\delta$ is a given positive constant.
In large-scale optimization, solving (\ref{eqn-trustProblem}) represents
the bulk of the computational effort in trust-region methods.
In this article, we propose a solver that is able to solve
(\ref{eqn-trustProblem}) to high accuracy.  

High-accuracy \LSR{} subproblem solvers are of interest in large-scale
optimization for two reasons: (1) In previous works, it has been shown that
more accurate subproblem solvers can require fewer overall trust-region
iterations, and thus, fewer overall function and gradient
evaluations~\cite{ipSSM,phasedSSM,ErwayM14}; and (2) it has been shown that
under certain conditions \SR{} matrices converge to the true Hessian--a
property that has not been proven for other quasi-Newton
updates~\cite{ConnGT91}.  While these convergence results have been proven
for \SR{} matrices, we are not aware of similar results
for \LSR{} matrices.  However, we hope that this paper will facilitate 
the study of \LSR{} quasi-Newton trust-region methods.


Solving large trust-region subproblems defined by indefinite matrices are
especially challenging, with optimal solutions lying on the boundary of the
trust-region.  Since \LSR{} matrices are not guaranteed to be positive
definite, additional care must be taken to handle indefiniteness and the
so-called \emph{hard case} (see, e.g., ~\cite{ConGT00a,MorS83}).  To our
knowledge, there are only two solvers designed to solve the quasi-Newton
subproblems to high accuracy for large-scale optimization.  Specifically,
the \MSS{} method~\cite{ErwayM14} is an adaptation of the Mor\'{e}-Sorensen
method~\cite{MorS83} to the limited-memory Broyden-Fletcher-Goldfarb-Shanno
(\LBFGS) quasi-Newton setting.  More recently, in~\cite{Burdakov15},
Burdakov et al. solve a trust-region subproblem where the trust region is
defined using \emph{shape-changing} norms.  It should be noted that while
the focus of~\cite{Burdakov15} is solving trust-region subproblems defined
by shape-changing norms instead of the usual Euclidean two-norm, Burdakov
et al. also present a trust-region method that is able to solve \LBFGS{}
quasi-Newton subproblems to high accuracy defined by the usual Euclidean
two-norm.  In this article, we present a method that extends what is
presented in~\cite{Burdakov15} to the indefinite case by handling three
additional non-trivial cases: (1) the singular case, (2) the so-called
\emph{hard case}, and (3) the general indefinite case.  We know of no
high-accuracy solvers designed specifically for \LSR{} trust-region
subproblems for large-scale optimization of the form
(\ref{eqn-trustProblem}) that are able to handle these cases associated
with \SR{} matrices.  It should be noted that large-scale solvers exist for
the general trust-region subproblem that are not designed to exploit
any specific structure of $B$.  Examples of these include
\LSTRS~\cite{rojas2001,Rojas2008} and \SSM~\cite{Hag01,HagP04}.

\medskip

Methods that solve the trust-region subproblem to high accuracy are
often based on the optimality conditions for a global solution
to the trust-region subproblem (see, e.g., Gay~\cite{Gay81},
Mor\'e and Sorensen \cite{MorS83} or Conn, Gould and
Toint \cite{ConGT00a}), given in the following theorem:

\medskip

\begin{theorem}\label{thrm-optimality}
  Let $\delta$ be a positive constant.  A vector $p^*$ is a global
  solution of the trust-region subproblem (\ref{eqn-trustProblem}) if and only
  if $\|p^*\|_2\leq \delta$ and there exists a unique $\sigma^*\ge 0$ such
  that $B+\sigma^* I$ is positive semidefinite and
\begin{equation}\label{eqn-optimality}
(B+\sigma^* I)p^*=-g \mgap \words{and} \mgap \sigma^*(\delta-\|p^*\|_2)=0.
\end{equation}
Moreover, if $B+\sigma^* I$ is positive definite, then the global
minimizer is unique.
\end{theorem}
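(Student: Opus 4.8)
The plan is to establish the two implications separately, with both resting on a single completed-square identity. Substituting $g=-(B+\sigma^* I)p^*$ into $\qDef(p)$ and completing the square yields, for every $p\in\Re^n$,
\[
\qDef(p)-\qDef(p^*)=\tfrac12(p-p^*)^T(B+\sigma^* I)(p-p^*)+\tfrac{\sigma^*}{2}\bigl(\|p^*\|_2^2-\|p\|_2^2\bigr).
\]
For the sufficiency direction I would take any feasible $p$ and read off nonnegativity of the right-hand side: the first term is nonnegative since $B+\sigma^* I$ is positive semidefinite, and the second is nonnegative because either $\sigma^*=0$, or else $\sigma^*>0$ and the complementarity condition forces $\|p^*\|_2=\delta\ge\|p\|_2$. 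Hence $\qDef(p)\ge\qDef(p^*)$ and $p^*$ is a global solution. If in addition $B+\sigma^* I$ is positive definite, the first term is strictly positive for $p\ne p^*$, so the minimizer is unique.

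For the converse, I would first note that a global minimizer exists because $\qDef$ is continuous on the compact ball $\{\,\|p\|_2\le\delta\,\}$. If $\|p^*\|_2<\delta$, then $p^*$ is an unconstrained minimizer, so $g+Bp^*=0$ and $B$ is positive semidefinite, and $\sigma^*=0$ satisfies (\ref{eqn-optimality}). If $\|p^*\|_2=\delta$, then $\nabla(\|p\|_2^2-\delta^2)=2p^*\neq 0$, so a constraint qualification holds at $p^*$ and the first-order KKT conditions produce a multiplier $\sigma^*\ge 0$ with $(B+\sigma^* I)p^*=-g$; the complementarity equation holds trivially because the constraint is active. The remaining step, which I expect to be the only delicate one, is showing that $B+\sigma^* I$ is positive semidefinite. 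Here I would invoke the displayed identity restricted to points $p$ on the sphere $\|p\|_2=\delta=\|p^*\|_2$, where the second term vanishes, so global optimality gives $(p-p^*)^T(B+\sigma^* I)(p-p^*)\ge 0$ for all such $p$. Given any direction $d$ with $d^Tp^*\neq 0$, the point $p=p^*-\bigl(2\,d^Tp^*/\|d\|_2^2\bigr)d$ lies on that sphere and $p-p^*$ is a nonzero multiple of $d$, whence $d^T(B+\sigma^* I)d\ge 0$; since such directions $d$ are dense in $\Re^n$, positive semidefiniteness follows by continuity.

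Finally, for uniqueness of $\sigma^*$ I would subtract two instances $(B+\sigma_i I)p^*=-g$, $i=1,2$, to obtain $(\sigma_1-\sigma_2)p^*=0$: if $p^*\neq 0$ this forces $\sigma_1=\sigma_2$, and if $p^*=0$ then $\|p^*\|_2=0<\delta$, so complementarity forces $\sigma_1=\sigma_2=0$. The main obstacle in the whole argument is the positive-semidefiniteness claim in the active-constraint case: it cannot be extracted from a naive directional second-order condition but must be read off from the spherical geometry of the feasible boundary, as above. Everything else amounts to bookkeeping around the completed-square identity.
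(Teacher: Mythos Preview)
Your argument is correct and is essentially the classical proof due to Sorensen and to Gay, reproduced for instance in Mor\'e--Sorensen and in Conn--Gould--Toint. The completed-square identity is exactly the right tool, the reflection trick $p=p^*-\bigl(2\,d^Tp^*/\|d\|_2^2\bigr)d$ is the standard way to extract positive semidefiniteness from optimality on the sphere, and your treatment of the uniqueness of $\sigma^*$ is fine.

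There is no comparison to be made with the paper's own proof, however: the paper does not prove Theorem~\ref{thrm-optimality}. It is quoted as background, with attribution to Gay~\cite{Gay81}, Mor\'e and Sorensen~\cite{MorS83}, and Conn, Gould and Toint~\cite{ConGT00a}, and then used as the foundation for the algorithmic development in Section~\ref{sec-proposed}. So your write-up supplies a proof that the paper deliberately omits; it matches the argument one finds in those cited references.
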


The Mor\'{e}-Sorensen method~\cite{MorS83} seeks a solution pair of the
form $(p^*,\sigma^*)$ that satisfies both equations in
(\ref{eqn-optimality}) by alternating between updating $p^*$ and
$\sigma^*$; specifically, the method fixes $\sigma^*$, solving for $p^*$
using the first equation and then fixes $p^*$, solving for $\sigma^*$ using
the second equation.  In order to solve for $p^*$ in the first equation,
the Mor\'{e}-Sorensen method uses the Cholesky factorization of
  $B+\sigma I$; for this reason,
this method is prohibitively expensive for
  general large-scale optimization when $B$ does not have a 
structure that can be exploited.
However, the Mor\'{e}-Sorensen method is
arguably the best \emph{direct} method for solving the trust-region
subproblem.  While the Mor\'{e}-Sorensen direct method uses a
  safeguarded Newton method to find $\sigma^*$, the method proposed
in this article
  makes use of Newton method's together with a judicious initial
  guess so that safeguarding is not needed
  to obtain $\sigma^*$. Moreover, unlike the
Mor\'{e}-Sorensen method, the proposed method computes $p^*$ by formula,
and in this sense, is an \emph{iteration-free} method.


This article is organized in five sections.  In the second section, we
review the \LSR{} quasi-Newton matrices how to find its eigenvalues.  In
the third section, we describe the proposed \OBS{} method.  Numerical
results are presented in the fourth section, and concluding remarks are
found in the fifth section of the article.

\bigskip

\noindent  \textbf{Notation.}
Unless explicitly indicated, $\|\cdot\|$ denotes the vector two-norm or its
subordinate matrix norm.  The identity matrix is denoted by $I$, and its
dimension  depends on the context.  Finally, we assume that all \LSR{}
updates are computed so that the \LSR{} matrix is well
defined.

\section{L-SR1 matrices}\label{sec-LSR1}

In this section, we review \LSR{} matrices, their compact formulation, and how to compute
their eigenvalues.

\medskip

Given a continuously differentiable function $f(x)\in\Re^{n}$ and
iterates $\{x_k\}$, 
the \SR{} quasi-Newton method generates a sequence of matrices $\{B_k\}$
from a sequence of update pairs $\{(s_k,y_k)\}$ where
$$
	s_k\defined x_{k+1} - x_k 
	\quad 
	\text{and}
	\quad 
	y_k \defined
	\nabla f(x_{k+1}) - \nabla f(x_{k}),
$$ 
and $\nabla f$ denotes the gradient of $f$.  Given an initial matrix $B_0$,
$B_k$ is defined as
\begin{equation}\label{eqn-sr1}
	B_{k+1} \defined B_k + \frac{(y_k- B_ks_k)(y_k - B_ks_k)^T}{(y_k - B_ks_k)^Ts_k},
\end{equation}
provided $(y_k - B_ks_k)^Ts_k \ne 0$.  In practice, $B_0$ is often taken to
be a nonzero constant multiple of the identity matrix.  Limited-memory
\SR{} (\LSR) methods store and use only the $M$ most-recently computed
pairs $\{(s_k,y_k)\}$, where $M\ll n$.  Often $M$ may be very small (for
example, Byrd et al.~\cite{ByrNS94} suggest $M\in [3,7]$).
For more background on the \SR{} update
formula, please see, e.g., \cite{GrNaS09,KelS98,KhaBS93,NocW99,SunY06,Wol94}.

\bigskip



\noindent \textbf{Compact representation.} To compute a compact representation of an \SR{} matrix,
we make use of the following matrices:
\begin{eqnarray*}
	S_k &\defined& [ \ s_0 \ \ s_1 \ \ s_2 \ \ \cdots \ \ s_{k} \ ] \ \in \ \Re^{n \times (k+1)}, \\
	Y_k &\defined& [ \ y_0 \ \ y_1 \ \ y_2 \ \ \cdots \ \ y_{k} \ ] \ \in \ \Re^{n \times (k+1)}.
\end{eqnarray*}
Furthermore, we make use of the following decomposition of $S_k^TY_k \in\Re^{(k+1) \times (k+1)}$:
$$
	S_k^TY_k =   L_k + D_k + R_k,
$$
where $L_k$ is strictly lower triangular, $D_k$ is diagonal, and $R_k$ is
strictly upper triangular.  We assume all updates are well-defined, i.e.,
$(y_k - B_ks_k)^Ts_k \ne 0$; otherwise, the update is skipped (see \cite[Sec.\ 6.2]{NocW99}).

\medskip

The compact representation of \SR{} matrices is given by
Byrd et al.~\cite[Theorem 5.1]{ByrNS94}, who showed that $B_{k+1}$ in \eqref{eqn-sr1}
can be written in the form 
\begin{equation}\label{eqn-form}
	B_{k+1} \ = \ B_0 + \Psi_k M_k  \Psi_k^T,
\end{equation}
where $\Psi_k \in \Re^{n \times (k+1)}$, $M_k \in \Re^{(k+1) \times
  (k+1)}$, and $B_0$ is a diagonal matrix (i.e., $B_0=\gamma I$,
$\gamma\in\Re$).  In particular, $\Psi_k$ and $M_k$ are given by
\begin{equation*}
	\Psi_k \ = \ 
       	Y_k  - B_0S_k \quad \text{and} \quad
        M_k \ = \ (D_k + L_k + L_k^T - S_k^TB_0S_k)^{-1}.
\end{equation*}
Since $k\le M$, the matrix $M_k$ is small and can be inverted in practice.
We assume that updates are performed so that $\Psi_k$ always has full
column rank and that $\gamma \ne 0$ so that $B_0$ is invertible.

\bigskip


\noindent \textbf{Eigenvalues.}
  We now review from how to compute the eigenvalues of
  quasi-Newton matrices that admit a compact representation
  (\cite{Burdakov15,ErwaM15}).  For simplicity, we drop the subscript $k$, ($k\le
M\ll n$), and consider the problem of computing the eigenvalues of
\begin{equation}\label{eqn-Bk} 
	B = B_0 + \Psi M \Psi^T,
\end{equation}
where $B_0=\gamma I$, $\gamma\in\Re$. 
Suppose $\Psi = QR$ is the ``thin'' QR factorization of $\Psi$, where $Q
\in \Re^{n \times (k+1)}$ and $R \in \Re^{(k+1) \times (k+1)}$ is invertible.
Then,
\begin{equation}\label{eqn-eig-1}
	B = \gamma I + QRMR^TQ^T.
\end{equation}
Let $U\hat{\Lambda}U^T$ be the spectral
decomposition of $RMR^T \in \Re^{(k+1)\times(k+1)}$, where $U \in \Re^{(k+1) \times (k+1)}$ is
orthogonal and $\hat{\Lambda}=\diag(\hat{\lambda}_1, \dots,
\hat{\lambda}_{k+1})$, with $\hat{\lambda}_1 \le \dots \le
\hat{\lambda}_{k+1}$.  We note that since both $M$ and $R$ are invertible,
$\hat{\lambda}_i\ne 0$ for $i=1,\ldots, k+1$.  Substituting this into
(\ref{eqn-eig-1}), we obtain
\begin{equation*}
	B = \gamma I + QU\hat{\Lambda}U^TQ^T.
\end{equation*}
Since $Q$ and $U$ have orthonormal columns, then $P_\parallel \defined
QU\in\Re^{n\times (k+1)}$ also has orthonormal columns.  Let $P_\perp$ be a
matrix whose columns form an orthonormal basis for the orthogonal
complement of the column space of $P_\parallel$.  Then, $P \defined [ \
P_{\parallel} \ \ \ P_{\perp} ] \in \Re^{n \times n}$ is such that $P^TP = PP^T =
I$.  Thus, the spectral decomposition of $B$ is given by
%
%
\begin{equation}\label{eqn-Beig}
	B = P\Lambda P^T, \qquad 
	\text{where }
	\Lambda  \defined
	\begin{bmatrix}
		\Lambda_1 & 0 \\
		0 & \Lambda_2
	\end{bmatrix} = 
	\begin{bmatrix}
		\hat{\Lambda} + \gamma I & 0 \\
		0 & \gamma I
	\end{bmatrix},
\end{equation}
where $\Lambda=\diag(\lambda_1,\ldots,\lambda_n)
=\diag(\hat{\lambda}_1+\gamma, \ldots, \hat{\lambda}_{k+1}+\gamma, \gamma, \ldots, \gamma)$, 
$\Lambda_1
\in\Re^{(k+1)\times (k+1)}$, and
$\Lambda_2 
\in\Re^{(n-k-1)\times (n-k-1)}$.
 The remaining eigenvalues, found
on the diagonal of $\Lambda_2$, are equal to $\lambda_{k+2} = \gamma$.  
(For further details, see \cite{Burdakov15,ErwaM15}.)
For the duration of this article, we assume the first $k+1$ eigenvalues in
$\Lambda$ are ordered in increasing values, i.e.,
$\lambda_1\le
\lambda_2 \le \ldots \le \lambda_{k+1}$. Finally, throughout
this article we denote the leftmost eigenvalue of $B$ by $\lambda_{\min}$,
which is computed as $\lambda_{\min} = \min\{\lambda_1, \gamma\}$.

\section{Proposed method} \label{sec-proposed}

The method proposed in this paper, called the ``Orthonormal Basis
  \LSR{}'' (OBS) method, is able to solve the trust-region subproblem to
high accuracy even when the \LSR{} matrix is indefinite.  The method makes
use of two separate techniques.  One technique uses (1) a Newton method to
find $\sigma^*$ that is initialized so its iterates converge monotonically
to $\sigma^*$ without any safeguarding when global solutions lie on the
boundary of the trust region, and (2) the compact formulation of \SR{}
matrices together with the strategy found in ~\cite{BurWX96} to compute
$p^*$ directly by formula.  The other technique is newly proposed in this
article.  This technique computes an optimal pair $(p^*,\sigma^*)$ using an
orthonormal basis for the eigenspace of $B$.  The idea of using an
orthonormal basis to represent $p^*$ is not new; this approach is found
in~\cite{Burdakov15}.  In this manuscript, we apply this approach to the
cases when $B$ is singular and indefinite.

\medskip

We begin by providing an overview of the \OBS{} method.  To solve the
trust-region subproblem, we first attempt to compute an unconstrained
minimizer $p_u$ to \eqref{eqn-trustProblem}.  If the solution exists (i.e.,
$B$ is not singular) and lies inside the trust region, the optimal solution for
the trust-region subproblem is given by $p^*=p_u$ and $\sigma^*=0$.  This
computation is simplified by first finding the eigenvalues of $B$ 
(see \eqref{eqn-Beig}); the solution $p_u$ to the unconstrained
problem is found using a strategy found in~\cite{BurWX96}, adapted
for \LSR{} matrices.  If $\|p_u\| > \delta$ or is not well-defined,
a global solution of the trust-region subproblem must lie on the
boundary, i.e., it is a root of the following function:
\begin{equation} \label{eqn-phi}
	\phi(\sigma) = \frac{1}{\| p(\sigma) \|} - \frac{1}{\delta}.
\end{equation}

When a global solution is on the boundary, we consider three cases separately:
(i) $B$ is positive definite and $\| p_u \| > \delta$, 
(ii) $B$ is positive semidefinite, and
(iii) $B$ is indefinite.
We note that the so-called \textsl{hard case} can only occur in the third
case.  
Details for each case is provided below; however, we 
begin by considering the unconstrained case.
\\
\\
\noindent \textbf{Computing the unconstrained minimizer.}  The \OBS{}
method begins by computing the eigenvalues of $B$ as in
Section~\ref{sec-LSR1}.  If $B$ is positive definite, the \OBS{} method
computes $\|p_u\|$ using properties of orthogonal matrices.  If  
$\|p_u\|\le\delta$, 
then $(p^*,\sigma^*) = (p_u, 0)$.
We begin by presenting
the computation of $\|p_u\|$, which is only performed when $B$ is positive
definite.  We include $\sigma$ in the derivation for completeness even
though $\sigma=0$ when finding the unconstrained minimizer.

The unconstrained minimizer $p_u$ is the solution to the first optimality
condition in (\ref{eqn-optimality}); however, the unconstrained minimizer
can also be found by rewriting the optimality condition using the spectral
decomposition of $B$.  Specifically, suppose $B=P\Lambda P^T$ is the spectral
decomposition of $B$, then $$-g = (B+\sigma I)p = (P\Lambda P^T+\sigma
I)p = P(\Lambda+\sigma I)v,$$ where $v=P^Tp$.  Since $P$ is orthogonal,
$\|v\|=\|p\|$, and thus, the first optimality condition expressed in
(\ref{thrm-optimality}) can be written as
\begin{eqnarray} 
  (\Lambda + \sigma I)v &=& -P^Tg. \label{eqn-firstcond} 
\end{eqnarray}
Note that spectral decomposition of $B$
transforms the first system in (\ref{eqn-optimality})
into a solve with a diagonal matrix in (\ref{eqn-firstcond}).
If we express the right hand side as
$$
	P^Tg = [ \ P_{\parallel} \ \ \ P_{\perp} ]^T  g =  
	\begin{bmatrix}
		P_{\parallel}^Tg \\
		P_{\perp}^Tg
	\end{bmatrix}
	\defined
	\begin{bmatrix}
		g_{\parallel} \\
		g_{\perp}
	\end{bmatrix},
$$
then 
\begin{equation}\label{eqn-normv}
	\| p(\sigma) \|^2 = \| v(\sigma ) \|^2 
	= \left\{	\sum_{i = 1}^{k+1} \frac{ (g_{\parallel})_i^2 }{(\lambda_i + \sigma)^2}\right\} + \frac{ \| g_{\perp} \|^2}{(\gamma + \sigma)^2}.
\end{equation}
Thus, the length of the unconstrained minimizer $p_u = p(0)$ is computed as
$\| p_u \| = \|v(0) \|$, 
where $g_{\parallel} = P_{\parallel}^Tg = (QU)^Tg=(\Psi R^{-1} U)^Tg$ and $\| g_{\perp}\|^2 =
\|g\|^2 - \|g_\parallel\|^2$.  Notice that determining $\|p_u\|$ does not
require forming $p_u$ explicitly.  Moreover, 
we are able to compute $\| g_{\perp}\|$ without having to compute 
$g_\perp = P_{\perp}^Tg$, which requires computing $P_{\perp}$, whose columns form a basis orthogonal to $P_\parallel$.

If $\| p_u \| \le \delta$, then $p^* = p_u$ and $\sigma^* = 0$.  
To compute $p_u$, 
we use the Sherman-Morrison-Woodbury formula for the inverse of $B$
as in~\cite{BurWX96}, adapted from the \BFGS{} setting into the \SR{} setting in
\eqref{eqn-Bk}:
\begin{equation}\label{eqn-pstar}
p^* = -\frac{1}{\tau^*}\left[I-\Psi(\tau^* M^{-1}+\Psi^T\Psi)^{-1}\Psi^T)\right]g,
\end{equation}
where $\tau^* = \gamma$.  Notice that this formula calls for the inversion
of $(\tau^* M^{-1}+\Psi^T\Psi)$; however, the size of this matrix small
($(k+1)\times (k+1)$ where $k\le M$), making the computation practical.

On the other hand, if $\| p_u \| > \delta$, then the solution $p^*$
must lie on the boundary.  We now consider the three cases as mentioned
above.
 
\medskip

\noindent \textbf{Case (i): $B$ is positive definite and 
$\| p_u \| >
  \delta$.}  Since the unconstrained minimizer lies outside the trust
region and $\|p_u\|=\|p(0)\|$, then $\phi(\sigma)$ given by (\ref{eqn-phi}) is such that
$\phi(0) < 0$.  
%
In this case, the \OBS{} method
uses Newton's method to find $\sigma^*$.   (Details on
Newton's method are
provided in Section~\ref{sub-Newton}.)  Finally, setting $\tau^* = \gamma
+ \sigma^*$, the global solution of the trust-region subproblem, $p^*$, is
computed using \eqref{eqn-pstar}.

\medskip

\noindent

\noindent \textbf{Case (ii): $B$ is singular and positive semidefinite.}
Since $\gamma\ne 0$ and $B$ is positive semidefinite, 
the leftmost eigenvalue is $\lambda_1=0$.
Let $r$ be the multiplicity of the zero eigenvalue; that is,
$\lambda_1 = \lambda_2 = \ldots = \lambda_r = 0 < \lambda_{r+1}$.
For $\sigma > 0$, the matrix $(\Lambda + \sigma I)$ is invertible, and
thus, $\| p(\sigma) \|$ in \eqref{eqn-normv} is well-defined for
$\sigma\in(0, \infty)$.  If $\lim_{\sigma \rightarrow 0^+} \phi(\sigma) <
0$, 
the \OBS{} method uses Newton's method to find $\sigma^*$. (Details on
Newton's method are
provided in Section~\ref{sub-Newton}.)  Setting $\tau^*
= \gamma + \sigma^*$, $p^*$ is computed using \eqref{eqn-pstar}.

We now consider the remaining case: $\lim_{\sigma \rightarrow 0^+} \phi(\sigma)\ge 0$.  
By~\cite[Lemma
7.3.1]{ConGT00a}, $\phi(\sigma)$ is strictly increasing on the interval
$(0,\infty)$.  
Thus, $\phi$ can only have a root in the interval $[0,\infty]$ at $\sigma=0$.
We now show that $(p^*,\sigma^*)$ is a global solution of the
trust-region subproblem with $\sigma^*=0$ and
\begin{equation}\label{eqn-pseudoinv}
	p^* = -B^{\dagger}g = -P(\Lambda + \sigma^* I)^{\dagger}P^Tg,
\end{equation}
where ${\dagger}$ denotes the Moore-Penrose pseudoinverse.  The second
optimality condition holds in (\ref{eqn-optimality}) since $\sigma^*=0$.
It can be shown that the first optimality condition holds 
by using the fact that $g$ must be perpendicular to the eigenspace
corresponding to the 0 eigenvalue of $B$, i.e., 
$(P_{\parallel}^Tg)_i = 0$ for $i = 1, \ldots,r$ 
(see \cite{MorS83}).

In this subcase, the trust-region subproblem solution $p^*$ can
be computed as follows:
\begin{eqnarray}
	p^* 	
		&=& -P(\Lambda + \sigma^* I)^{\dagger}P^Tg \nonumber  \\
		&=&	\begin{cases}
				\displaystyle -P_{\parallel}(\Lambda_1+  \sigma^*I)^{\dagger} P_{\parallel}^Tg 
				- \frac{1}{\gamma + \sigma^*}P_{\perp}P_{\perp}^Tg 
				& \text{\phantom{\ $\Psi R^{-1}R^{-T}\Psi^T)g$}} \text{if $\sigma^* \ne -\gamma$,}\\
				-P_{\parallel}(\Lambda_1+  \sigma^*I)^{-1} P_{\parallel}^Tg 
				& \text{\phantom{\ $\Psi R^{-1}R^{-T}\Psi^T)g$}}  \text{otherwise}
			\end{cases}
			 \nonumber \\
		&=& \begin{cases}
			\displaystyle -\Psi R^{-1}U(\Lambda_1 +  \sigma^*I)^{\dagger} g_{\parallel}
			- \frac{1}{\gamma + \sigma^*} (I - \Psi R^{-1}R^{-T}\Psi^T)g & \text{if $\sigma^* \ne -\gamma$,}\\
			-\Psi R^{-1}U(\Lambda_1 +  \sigma^*I)^{-1} g_{\parallel} & \text{otherwise,}
			\end{cases}
			\label{eq:pseudoinv}
\end{eqnarray}
which makes use of the chain of following chain of equalities:
 $P_{\perp}P_{\perp}^Tg = (I - P_{\parallel}P_{\parallel}^T)g  =  (I - \Psi R^{-1}R^{-T}\Psi^T)g.$
The actual computation of $p^*$ in (\ref{eq:pseudoinv}) 
requires only matrix-vector
products; no additional
large matrices need to be formed to find a global solution of the
trust-region subproblem.


\medskip

\noindent \textbf{Case (iii): $B$ is indefinite.}  Since $B$ is indefinite,
$\lambda_{\min}=\min\{\lambda_1,\gamma\} < 0$.  Let $r$ be the algebraic
multiplicity of the leftmost eigenvalue.  For $\sigma > -\lambda_{\min}$,
$(\Lambda + \sigma I)$ is invertible, and thus, $\| p(\sigma) \|$ in
\eqref{eqn-normv} is well defined in the interval $(-\lambda_{\min},
\infty)$.  

If $\lim_{\sigma \rightarrow -\lambda_{\min}^+} \phi(\sigma) < 0$, then
there exists $\sigma^*\in (-\lambda_{\min}, \infty)$ with $\phi(\sigma^*) =
0$ that can obtained as in Case (i) using Newton's method (see Sec.~\ref{sub-Newton}).  
The solution $p^*$ is computed via \eqref{eqn-pstar} with $\tau^* = \gamma + \sigma^*$.

If $\lim_{\sigma \rightarrow -\lambda_{\min}^+} \phi(\sigma) \ge 0$, then $g$ must be 
orthogonal to the eigenspace associated with the leftmost eigenvalue of $B$ \cite{MorS83}.
In other words, if $\lambda_{\min}=\lambda_1$, then $(g_\parallel)_i=0$ for $i=1, \ldots, r$;
otherwise, if $\lambda_{\min}=\gamma$, then $\| g_{\perp} \| = 0$.
We now consider the cases of equality and inequality separately.

If
$\lim_{\sigma \rightarrow -\lambda_{\min}^+} \phi(\sigma) = 0$, then
$\sigma^* = -\lambda_{\min}> 0$, and a global solution of the
trust-region subproblem is given by
$$
	p^* = -(B+\sigma^* I)^{\dagger}g = -P(\Lambda + \sigma^* I)^{\dagger}P^Tg.
$$
As in Case (ii), $p^*$ is obtained from \eqref{eq:pseudoinv}
and can be shown to satisfy the optimality conditions (\ref{eqn-optimality}).

Finally, if
$\lim_{\sigma \rightarrow -\lambda_{\min}^+}\phi(\sigma) > 0$, then
$$
\lim_{\sigma \rightarrow -\lambda_{\min}^+} \| p(\sigma)\| =
\lim_{\sigma \rightarrow -\lambda_{\min}^+} \|
 -\left(B + \sigma^* I\right)^{-1}g\| <\delta.
 $$
This corresponds to the so-called \emph{hard case}.  
The optimal solution is given by 
\begin{equation}\label{eqn-pseudoinv-null}
	p^* = \hat{p}^* + z^*, \quad
\text{where} \quad \hat{p}^*=-\left(B + \sigma^* I\right)^{\dagger}g, \quad
z^*=\alpha u_{\min},
\end{equation}
and where $u_{\min}$ is an eigenvector associated with $\lambda_{\min}$ and $\alpha$
is computed so that $\|p^*\|=\delta$~\cite{MorS83}. 
As in Case
(ii), we avoid forming $P_\perp$ using (\ref{eq:pseudoinv}) to compute
$\hat{p}^*$.  
The computation of $u_{\min}$ depends
on whether $\lambda_{\min}$ is found in $\Lambda_1$ or $\Lambda_2$ in
(\ref{eqn-Beig}).  If $\lambda_{\min}=\lambda_1$ then
the first column of $P$ is a leftmost eigenvector
of $B$, and thus, $u_{\min}$ is set to the first column of $P_\parallel$.
On other hand, if $\lambda_{\min} = \gamma$, then any vector in the column
space of $P_{\perp}$ will be an eigenvector of $B$ corresponding to
$\lambda_{\min}$.  Since Range($P_{\parallel})^{\perp} = $
Range($P_{\perp}$), the projection matrix $(I -
P_{\parallel}P_{\parallel}^T)$ maps onto the column space of
$P_{\perp}$. For simplicity, we map one canonical basis vector at a time (starting with $e_1$)
into the space spanned by the columns of $P_\perp$ until we obtain
a nonzero vector.  Since $\dim(P_{\parallel})=k+1 \ll n$, this process
is practical and will result with a vector that lies in Range($P_\perp$);
that is, $u_{\min} \defined (I - P_{\parallel}P_{\parallel}^T)e_j$ for 
at least one $j$ in $\{1 \le j \le k+2\}$ with $\|u_{\min}\| \ne 0$. (We note
that both $\lambda_1$ and $\gamma$ cannot both be $\lambda_{\min}$
since $\lambda_1=\hat{\lambda}_1+\gamma$ and $\hat{\lambda}_1\ne 0$ (see
Section~\ref{sec-LSR1})).


\medskip

The following theorem provides details for computing
optimal trust-region subproblem solutions 
characterized by Theorem 1 for the case when $B$ is
indefinite. 

\medskip

\begin{theorem}\label{thrm-alpha}
  Consider the trust-region subproblem given by $$
  \minimize{p\in\Re^n}\mgap\qDef(p) \defined g^Tp + \frac{1}{2} p^TB p
  \bgap\subject \mgap \|p\|_2 \le \delta,$$ where $B$ is indefinite.
  Suppose $B=P\Lambda P^T$ is the spectral decomposition of $B$, and
  without loss of generality, assume
  $\Lambda=\diag(\lambda_1,\ldots,\lambda_n)$ is such that
  $\lambda_{\min}=\lambda_1\le \lambda_2 \le \ldots \le \lambda_n$.
  Further, suppose $g$ is orthogonal to the eigenspace associated with
  $\lambda_{\min}$, i.e., $g^TPe_j=0$ for $j=1,\ldots, r$, where $r\ge 1$
  is the algebraic multiplicity of $\lambda_{\min}$.  Then, if the optimal
  solution of the subproblem is with $\sigma^*=-\lambda_{\min}$, then the
  global solutions to the trust-region subproblem are given by $p^* =
  \hat{p}^*+z^*$ where $\hat{p}^*=-\left(B + \sigma^* I\right)^{\dagger}g$
  and $z^*=\pm \alpha u_{\min}$, where $u_{\min}$ is a unit vector in the
  eigenspace associated with $\lambda_{\min}$ and
  $\alpha=\sqrt{\delta^2-\|\hat{p}^*\|^2}$. Moreover,
\begin{equation}\label{eqn-roummel}
\qDef(\hat{p}^*\pm \alpha
  z^*)=\frac{1}{2}g^T\hat{p}^*-\frac{1}{2}\sigma^*\delta^2.
\end{equation}
\end{theorem}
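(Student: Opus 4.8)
The plan is to verify directly that the vector $p^* = \hat{p}^* \pm \alpha u_{\min}$ satisfies the two optimality conditions of Theorem~\ref{thrm-optimality} with $\sigma^* = -\lambda_{\min}$, and then to compute $\qDef(p^*)$ by plugging this expression into the quadratic. First I would record the structural facts: $\hat{p}^*$ lies in the range of $(B+\sigma^*I)$, which (since $\sigma^*=-\lambda_{\min}$) is the orthogonal complement of the eigenspace $\mathcal{E}$ of $\lambda_{\min}$; the eigenvector $u_{\min}\in\mathcal{E}$ is therefore orthogonal to $\hat{p}^*$; and $g\perp\mathcal{E}$ by hypothesis, so $g^Tu_{\min}=0$. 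Using the spectral decomposition, $(B+\sigma^*I)\hat{p}^* = -(B+\sigma^*I)(B+\sigma^*I)^{\dagger}g = -g$ because $g$ lies in the range of $(B+\sigma^*I)$ (its components along $\mathcal{E}$ vanish), while $(B+\sigma^*I)u_{\min} = (\lambda_{\min}+\sigma^*)u_{\min} = 0$. Hence $(B+\sigma^*I)p^* = -g$, the first optimality condition. The choice $\alpha=\sqrt{\delta^2-\|\hat{p}^*\|^2}$ together with orthogonality of $\hat{p}^*$ and $u_{\min}$ gives $\|p^*\|^2 = \|\hat{p}^*\|^2 + \alpha^2\|u_{\min}\|^2 = \delta^2$, so the complementarity condition $\sigma^*(\delta-\|p^*\|)=0$ holds as well (both factors are handled: $\|p^*\|=\delta$). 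Positive semidefiniteness of $B+\sigma^*I$ is immediate since $\sigma^*=-\lambda_{\min}$ shifts the smallest eigenvalue to $0$.

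For the value of the objective, I would substitute $p^*=\hat{p}^*+z^*$ (with $z^*=\pm\alpha u_{\min}$) into $\qDef(p)=g^Tp+\tfrac12 p^TBp$ and expand. The cross terms simplify because $g^Tz^* = \pm\alpha\, g^Tu_{\min}=0$ and, using $B\hat{p}^* = -g-\sigma^*\hat{p}^*$ and $Bz^* = \lambda_{\min}z^* = -\sigma^* z^*$, one gets $\hat{p}^{*T}Bz^* = -\sigma^*\hat{p}^{*T}z^* = 0$ and $z^{*T}Bz^* = -\sigma^*\|z^*\|^2 = -\sigma^*\alpha^2$. The remaining terms are $g^T\hat{p}^* + \tfrac12\hat{p}^{*T}B\hat{p}^*$; writing $\hat{p}^{*T}B\hat{p}^* = \hat{p}^{*T}(-g-\sigma^*\hat{p}^*) = -g^T\hat{p}^* - \sigma^*\|\hat{p}^*\|^2$, this collapses to $\tfrac12 g^T\hat{p}^* - \tfrac12\sigma^*\|\hat{p}^*\|^2$. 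Combining with the $z^*$ contribution $-\tfrac12\sigma^*\alpha^2$ and using $\|\hat{p}^*\|^2+\alpha^2=\delta^2$ yields $\qDef(p^*) = \tfrac12 g^T\hat{p}^* - \tfrac12\sigma^*\delta^2$, which is \eqref{eqn-roummel}.

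The main obstacle, such as it is, is bookkeeping rather than depth: one must be careful that $g$ genuinely lies in $\mathrm{Range}(B+\sigma^*I)$ so that $(B+\sigma^*I)(B+\sigma^*I)^{\dagger}g=g$ exactly — this is precisely where the hypothesis $g^TPe_j=0$ for $j=1,\dots,r$ is used, and it should be stated via the spectral decomposition $B+\sigma^*I = P(\Lambda+\sigma^*I)P^T$ with $(\Lambda+\sigma^*I)^{\dagger}$ acting diagonally (zeros in the first $r$ slots, reciprocals elsewhere). A second point requiring a line of justification is that $r\ge 1$ guarantees the eigenspace is nontrivial so $u_{\min}$ exists, and that $\|\hat{p}^*\|\le\delta$ (from the hard-case hypothesis $\lim_{\sigma\to-\lambda_{\min}^+}\|p(\sigma)\|<\delta$) so that $\alpha$ is real. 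Once these are in place, both the optimality verification and the objective computation are short algebraic manipulations with the diagonalized operators.
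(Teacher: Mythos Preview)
Your argument is correct and follows essentially the same route as the paper: establish the orthogonality $\hat{p}^*\perp u_{\min}$ (hence $\alpha=\pm\sqrt{\delta^2-\|\hat{p}^*\|^2}$), use $g\perp u_{\min}$, and then reduce $\qDef(p^*)$ via the identity $Bp^*=-g-\sigma^*p^*$. The only cosmetic difference is that the paper invokes \cite{MorS83} to assert that $p^*=\hat{p}^*+\bar{\alpha}\,u_{\min}$ is a global solution and then solves for $\bar{\alpha}$, whereas you verify the optimality conditions of Theorem~\ref{thrm-optimality} from scratch; your direct verification is self-contained and your remark that $\|\hat{p}^*\|\le\delta$ must hold for $\alpha$ to be real is a point the paper leaves implicit.
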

\begin{proof}
  By~\cite{MorS83}, a global solution of trust-region
  subproblem is given by $p^* = \hat{p}^*+z^*$ where $\hat{p}^*=-\left(B +
    \sigma^*\right)^{\dagger}g$, $z^*= \bar{\alpha} \, u_{\min}$, and
  $\bar{\alpha}$ is such that $\|p^*\| = \delta$.  It remains to show that both
  roots of the quadratic equation $\|\hat{p}^*+\alpha
  u_{\min}\|^2=\delta^2$ are given by
  $\alpha=\pm\sqrt{\delta^2-\|\hat{p}^*\|^2}$ and that (\ref{eqn-roummel})
  holds.

To see this, we begin by showing that $\left(\hat{p}^*\right)^Tz^*=0$.  Let $r\ge 1$ be
the algebraic multiplicity of $\lambda_{\min}$.  Then,
$\hat{p}^*=-(B+\sigma^* I)^\dagger g = -P(\Lambda + \sigma^* I)^{\dagger}P^Tg= -P
v(\sigma^*)$, where $v(\sigma^*)\defined (\Lambda + \sigma^* I)^{\dagger}P^Tg$.
Notice that by definition of the pseudoinverse, $v(\sigma^*)_i=0$ for
$i=0,\ldots, r$. Since $u_{\min}$ is in the 
eigenspace associated with $\lambda_{\min}$, then it can be written
as a linear combination of the first $r$ columns of $P$, i.e.,
 $u_{\min}=\sum_{i=1}^{r} \tilde{u}_i Pe_i$ for some $\{\tilde{u}_i\}\in\Re$
where $e_i$ denotes the $i$th canonical basis vector.
Then,
 $$\left(\hat{p}^*\right)^Tz = \alpha
\left(\hat{p}^*\right)^Tu_{\min} = \alpha (Pv(\sigma^*))^T \left(  
 \sum_{i=1}^{r} \tilde{u}_i Pe_i \right) =
  \alpha v(\sigma^*)^T \sum_{i=1}^{r} \tilde{u}_i e_i = 0,$$ since the first $r$ entries of
  $v(\sigma^*)$ are zero.  Since $\hat{p}^*$ is orthogonal to $z^*$, then
  $$\alpha = \pm\sqrt{\delta^2-\|\hat{p}^*\|^2}.$$  

To see (\ref{eqn-roummel}), 
consider the following:
\begin{eqnarray} \label{eqn-roummel2}
\qDef( \hat{p}^* \pm \alpha \, u_{\min}) &= &  (\hat{p}^* \pm \alpha \, u_{\min})^Tg +
\frac{1}{2}(\hat{p}^* \pm \alpha \, u_{\min})^TB(\hat{p}^* \pm \alpha \, u_{\min}) \nonumber \\
& =&  (\hat{p}^* \pm \alpha \, u_{\min})^T(g-\frac{1}{2} g -\frac{1}{2} \sigma^*(\hat{p}^* \pm \alpha \, u_{\min}) )\nonumber \\
&=& \frac{1}{2}(\hat{p}^* \pm \alpha \, u_{\min})^Tg -\frac{1}{2} \sigma^*\| \hat{p}^* \pm \alpha \, u_{\min} \|^2 \nonumber \\
&=& \frac{1}{2}g^T\hat{p}^* -\frac{1}{2} \sigma^*\delta^2, 
\end{eqnarray}
since $u_{\min}^Tg =  \left(\sum_{i=1}^{r} \tilde{u}_i Pe_i\right)^Tg=
\left(\sum_{i=1}^{r} \tilde{u}_i e_i^TP^T\right)g =
0$ since $g$ is orthogonal to the eigenspace associated with $\lambda_{\min}$. $\square$
\end{proof}

\bigskip

The OBS method is summarized in Algorithm 1.

\begin{algorithm}[!h]
\SetAlgoNoLine
Compute $\Psi = QR$, the ``thin'' QR factorization (or, compute the Cholesky factor $R$ of $\Psi^T\Psi$);
\\
Compute $RMR^T = U\hat{\Lambda}U^T$ (the spectral decomposition)
with $\hat{\lambda}_1 \le \hat{\lambda}_2 \le \dots \le \hat{\lambda}_{k+1}$;  \\
Let $\Lambda_1 = \hat{\Lambda} + \gamma I$ (as in (\ref{eqn-Beig}));\\
Let $\lambda_{\min} = \min\{\lambda_1, \gamma\}$, and let $r$ be its algebraic multiplicity;\\
Define $P_{\parallel}  \defined \Psi R^{-1}U$ and $g_{\parallel} \defined P_{\parallel}^Tg$;  \\
Compute $a_j = (g_{\parallel})_j$ for $j = 1, \dots, k+1$ and $a_{k+2} = \sqrt{\| g \|_2^2 - \| g_{\parallel} \|_2^2}$;\\
\uIf{$\lambda_{\min} > 0$ \textnormal{ and } $\bar{\phi}(0) \ge 0$}{
		$\sigma^* = 0$ and compute $p^*$ from \eqref{eqn-pstar} with $\tau^*=\gamma$;
}
\uElseIf{$\lambda_{\min} \le 0$ \textnormal{ and } $\bar{\phi}(-\lambda_{\min}) \ge 0$}{
		$\sigma^* = -\lambda_{\min}$;\\
		Compute $p^*$ using \eqref{eq:pseudoinv};\\
		\If{$\lambda_{\min} < 0$}{
			Compute $z^*$ using \eqref{eqn-pseudoinv-null};\\
			$p^* \leftarrow p^* + 	z^*$;\\
		}
}
\Else{
		Use Newton's method to find $\sigma^*$, a root of $\phi$, in $(\max\{-\lambda_{\min}, 0 \}, \infty)$; \\
Compute $p^*$ from \eqref{eqn-pstar} with $\tau^*=\sigma^*+\gamma$;\\		
}
\caption{Orthonormal Basis SR1 method}\label{alg-SOBS}
\end{algorithm}


\subsection{Newton's method}\label{sub-Newton}
Newton's method is used to find a root of $\phi(\sigma)$ whenever
$$
\lim_{\sigma \rightarrow -\lambda_{\min}^+}\phi(\sigma) = \lim_{\sigma
  \rightarrow -\lambda_{\min}^+}\frac{1}{\| p(\sigma) \|} -
\frac{1}{\delta} < 0.
$$
Since $\| p(\sigma) \|$ does not exist at the eigenvalues of $B$, we first 
define the continuous extension of $\phi(\sigma)$,
whose domain is all of $\Re$.   Let $a_i = (g_{\parallel})_i$ for $1 \le
i \le k+1$, $a_{k+2} =  \| g_{\perp} \|$, and
$\lambda_{k+2} = \gamma$.  Combining the terms in \eqref{eqn-normv}
that correspond to the same eigenvalues and eliminating all terms with 
zero numerators, we have that  for $\sigma \ne \lambda_i$, $\| p(\sigma) \|^2$ can be written as
$$
	\| p(\sigma) \|^2 
	= \sum_{i=1}^{k+2} \frac{a_i^2}{(\lambda_i +\sigma)^2} 
	= \sum_{i=1}^{\ell} \frac{\bar{a}_i^2}{(\bar{\lambda}_i + \sigma)^2},
$$
such that for $i=1,\ldots, \ell$, 
$\bar{a}_i \ne 0$ and $\bar{\lambda}_i$ are \emph{distinct} eigenvalues of $B$
with $\bar{\lambda}_1 < \bar{\lambda}_2 < \dots < \bar{\lambda}_{\ell}$.
Note that the last sum is well-defined for $\sigma = \lambda_j \ne \bar{\lambda}_i$, for $1 \le i \le \ell$.
Then, the continuous extension $\bar{\phi}(\sigma)$ of $\phi(\sigma)$ is given by:
$$
	\bar{\phi}(\sigma) =
	\begin{cases}
		\displaystyle 
		-\frac{1}{\delta} & \text{if $\sigma = -\bar{\lambda}_i$, \,\, $1 \le i \le \ell$}\\
		\displaystyle \frac{1}{\sqrt{\displaystyle \sum_{i=1}^{\ell} \frac{{\bar{a}_i}^2}{(\bar{\lambda}_i+\sigma)^2}}}- \frac{1}{\delta} & \text{otherwise.}
	\end{cases}
$$
A crucial characteristic of $\bar{\phi}$ is that it
takes on the value of the 
limit of $\phi$ at $\sigma = -\lambda_i$, for $1 \le i \le k+2$.
In other words, for each
$i\in\{1,\ldots, k+2\}$,
$$
\lim_{\sigma\rightarrow -\lambda_i}\phi(\sigma) = 
\bar{\phi}(-\lambda_i). 
$$
The derivative of $\bar{\phi}(\sigma)$ is used only for Newton's method
and is computed as follows:
\begin{equation}\label{eqn-phiprime}
	\bar{\phi}'(\sigma) = 
	\left (
		\sum_{i=1}^{\ell}
		\frac{\bar{a}_i^2}{(\bar{\lambda}_i + \sigma)^2}
	\right )^{-\frac{3}{2}}
	\sum_{i=1}^{\ell}
	\frac{\bar{a}_i^2}{(\bar{\lambda}_i+\sigma)^3}
	\qquad \text{if $\sigma \ne -\bar{\lambda}_i, \,\, 1 \le i \le \ell$.}
\end{equation}
Note that $\bar{\phi}'(-\lambda_j)$ exists as long as 
$-\lambda_j \ne - \bar{\lambda}_i$, for $1 \le i \le \ell$.
Furthermore, for $\sigma > -\bar{\lambda}_1$,
$\bar{\phi}' (\sigma) > 0$, i.e., $\bar{\phi}(\sigma)$ is strictly increasing on the interval 
$[-\bar{\lambda}_1, \infty)$.
Finally, it can be shown that $\bar{\phi}''(\sigma) < 0$ for $\sigma > -\bar{\lambda}_1$,
i.e., $\bar{\phi}(\sigma)$ is concave on the interval $[-\bar{\lambda}_1, \infty)$.
For illustrative purposes, we plot examples of $\bar{\phi}(\sigma)$ in Fig.\ 1 for the different cases we considered in this Section~\ref{sec-proposed}.  Note that we use Newton's method to find $\sigma^*$ when
(a) $\lambda_{\min} \ge 0$ and $\bar{\phi}(0)$ (see Figs.\ 1(b) and (c)), or
(b) $\lambda_{\min} < 0$ and $\bar{\phi}(-\lambda_{\min}) < 0$ (see Figs.\ 1(d) and (e)).

\begin{figure}[htbp]\label{fig-phi}
	\begin{center}
	\begin{tabular}{cc}
	\includegraphics[width=.47\textwidth]{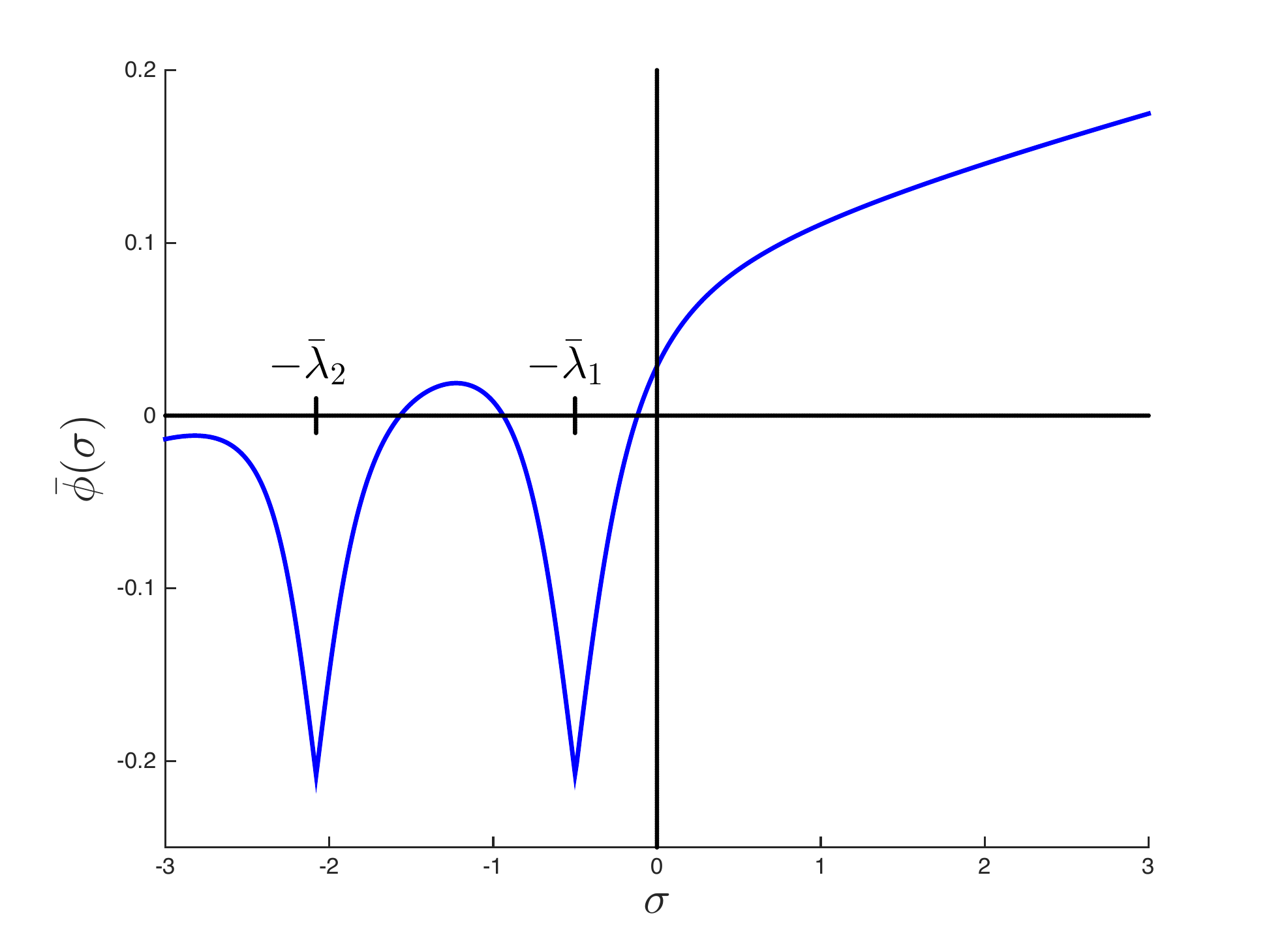} &
	\includegraphics[width=.47\textwidth]{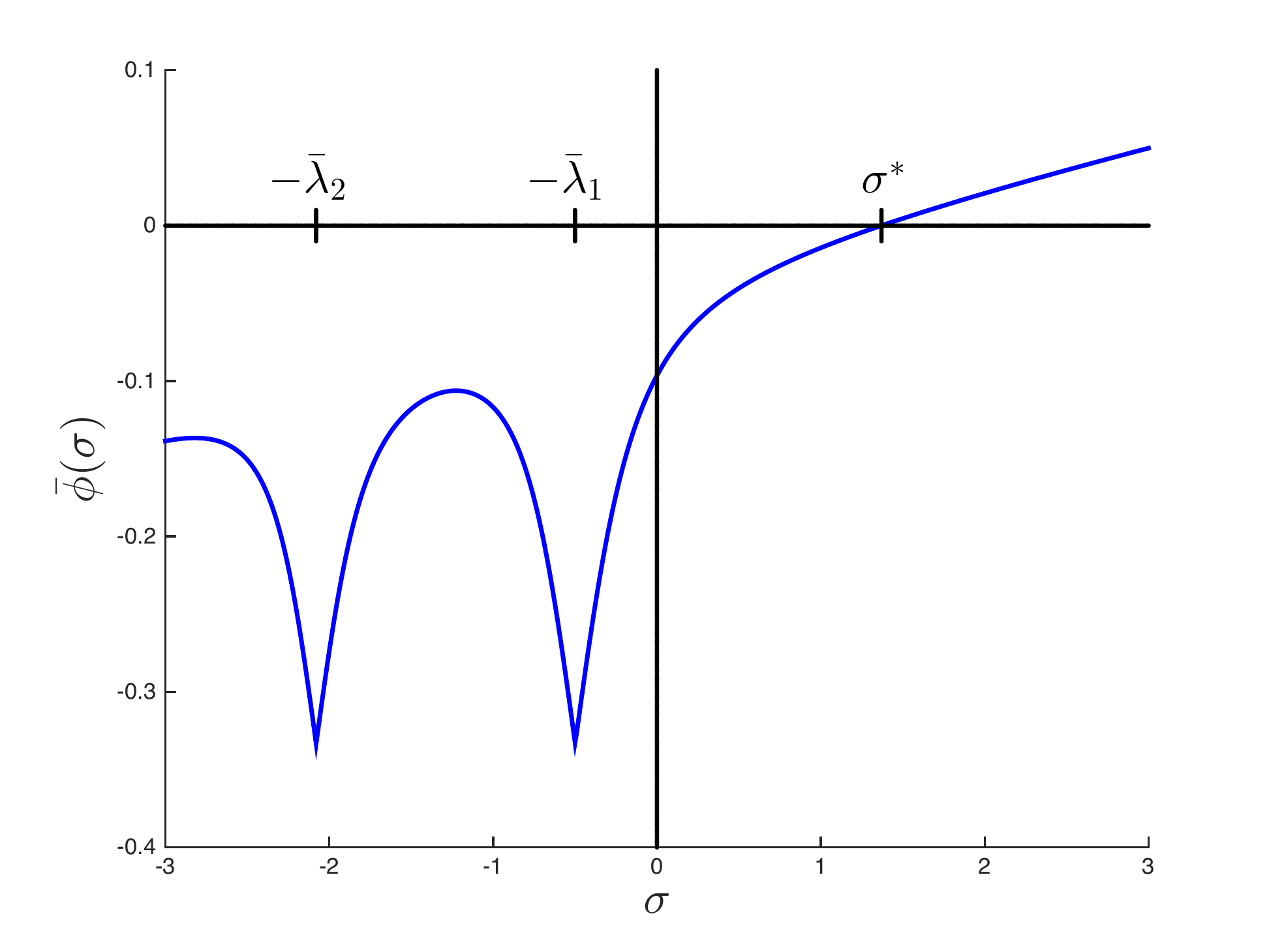} \\
	(a) & (b)\\
	\includegraphics[width=.47\textwidth]{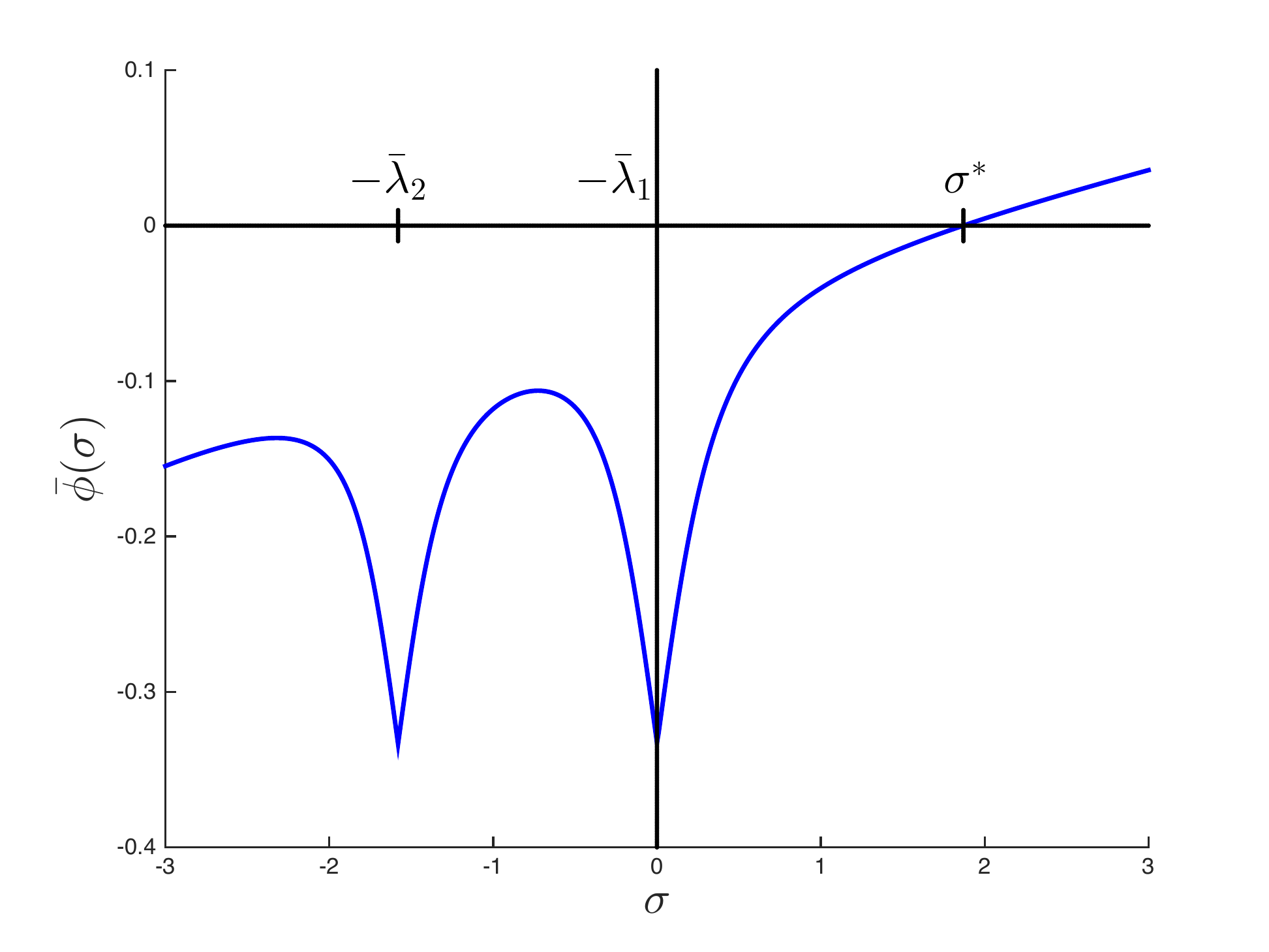} &
	\includegraphics[width=.47\textwidth]{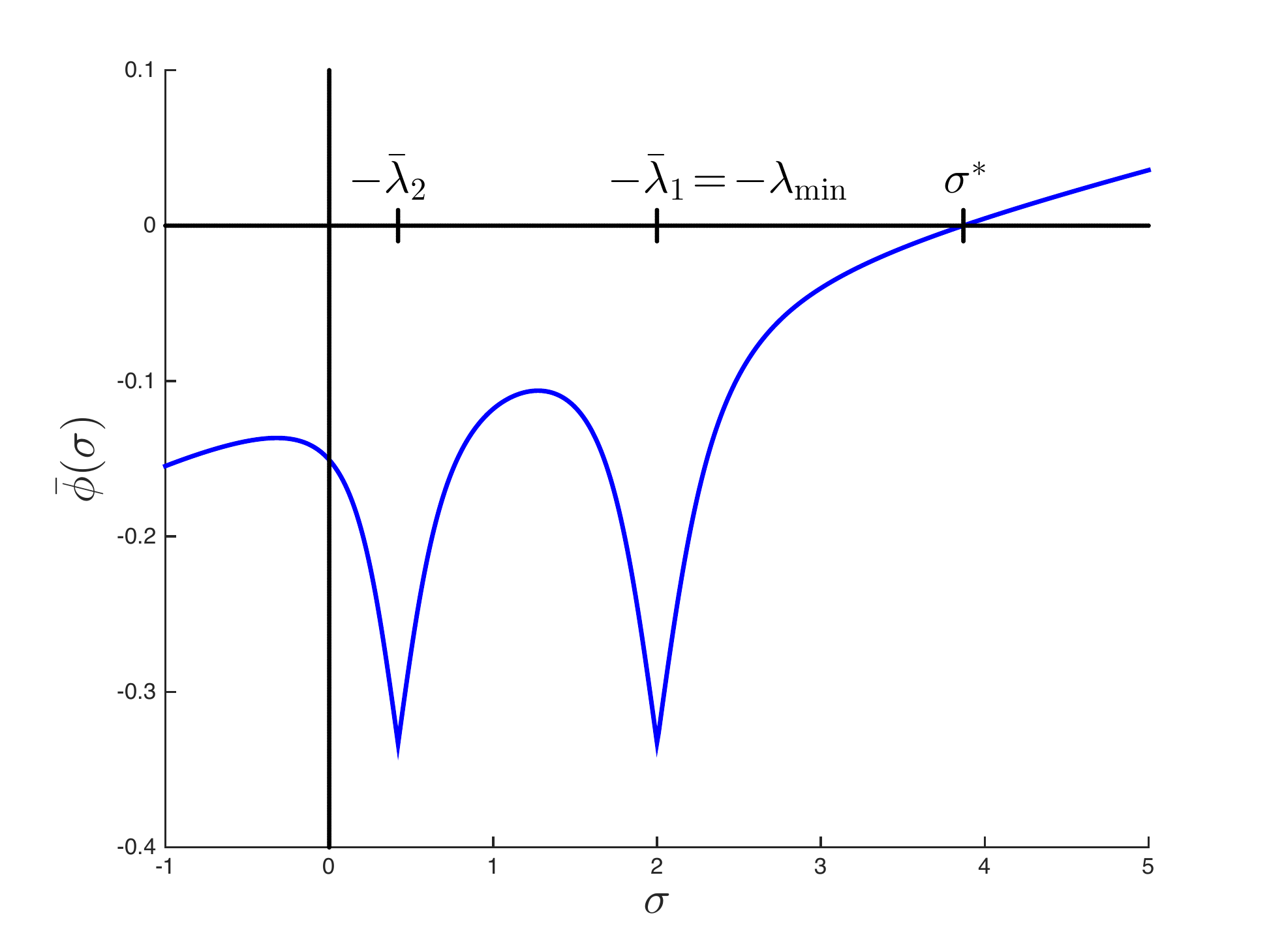} \\
	(c) & (d)\\
	\includegraphics[width=.47\textwidth]{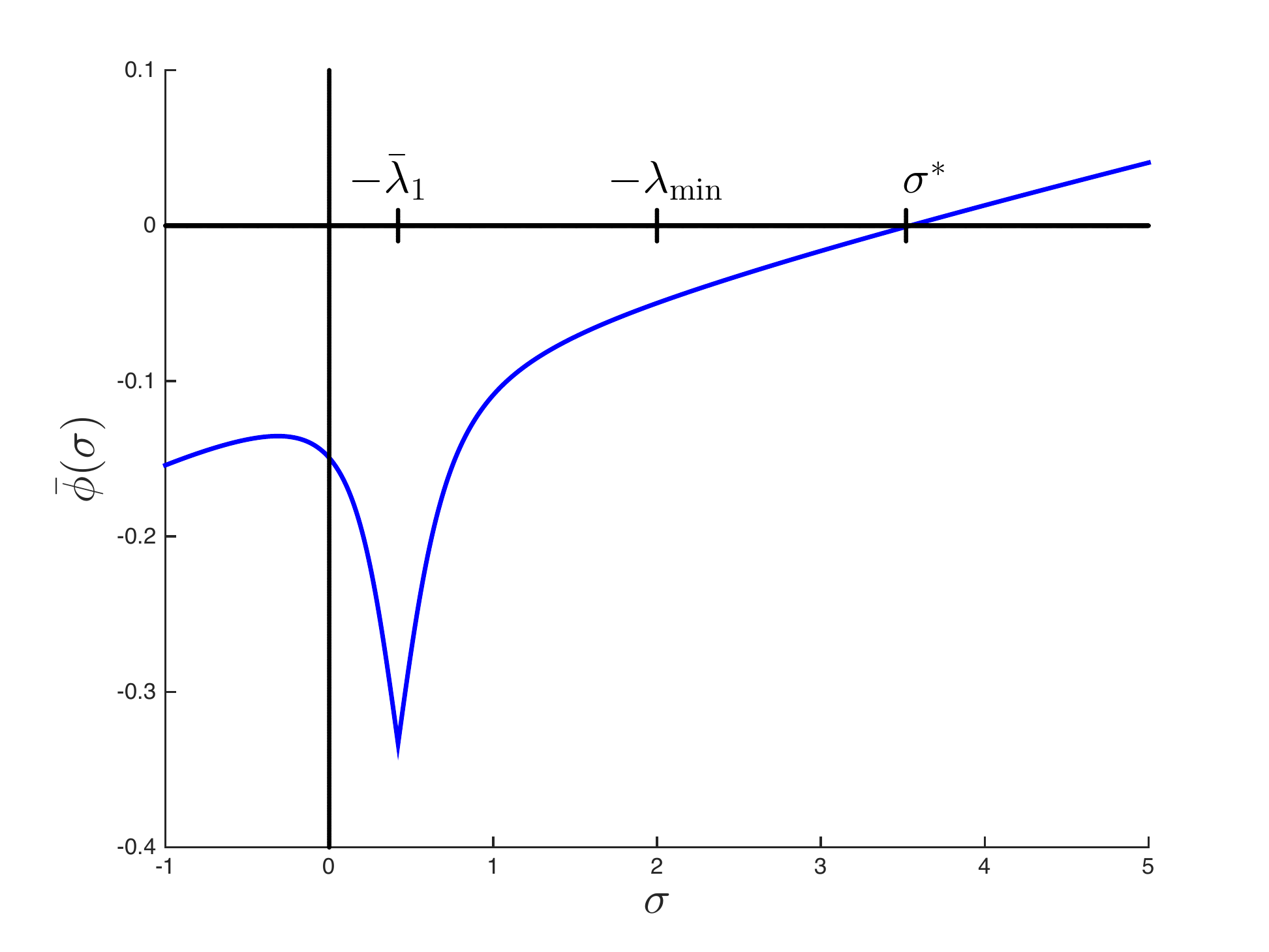} &
	\includegraphics[width=.47\textwidth]{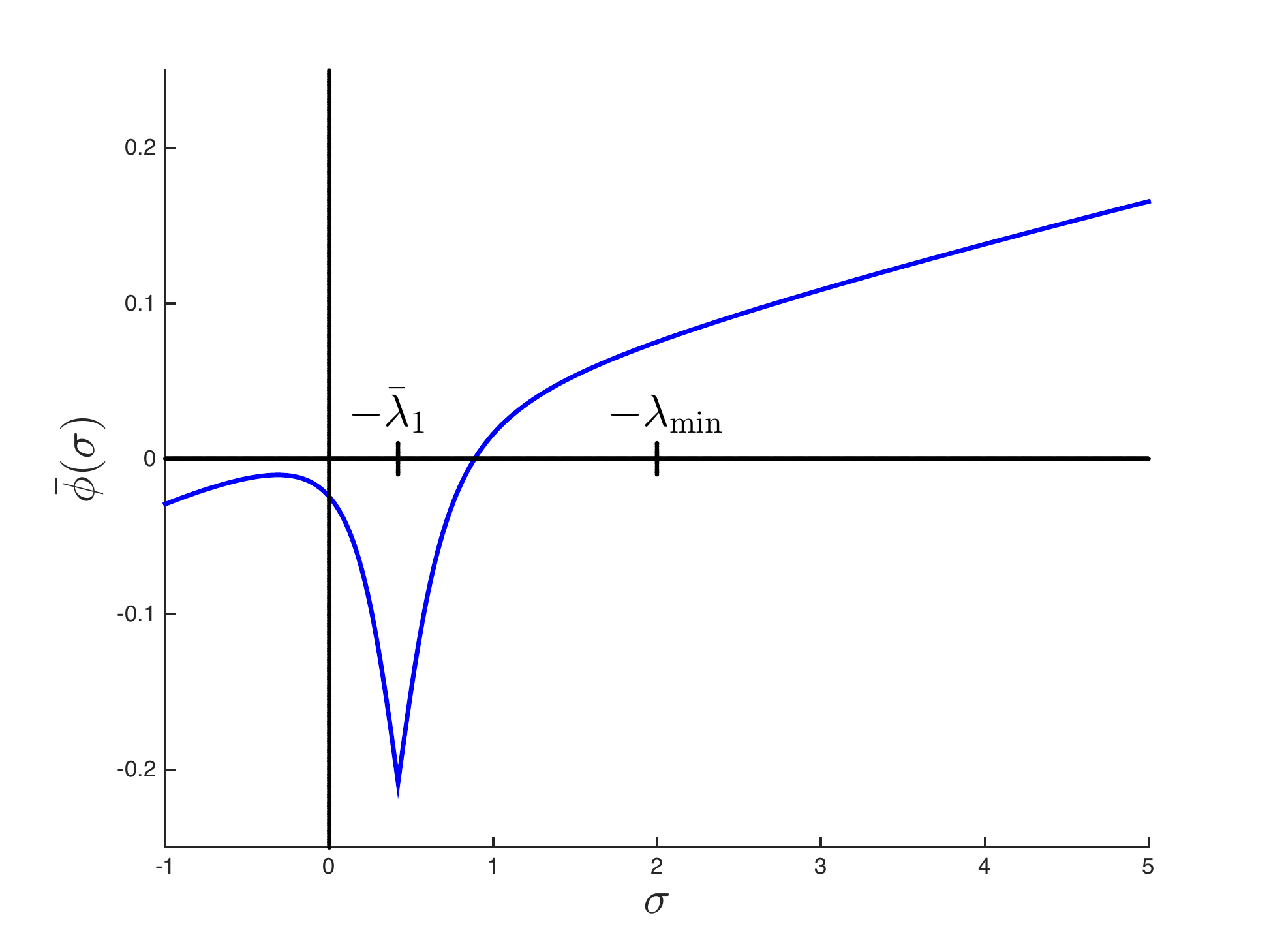} \\
	(e) & (f)\\
	\end{tabular}
	\end{center}
	\caption{Graphs of the function $\bar{\phi}(\sigma)$.
	(a) The positive-definite case where the unconstrained minimizer
	is within the trust-region radius, i.e., $\bar{\phi}(0) \ge 0$, and $\sigma^* = 0$.
	(b) The positive-definite case where the unconstrained minimizer is
	infeasible, i.e., $\bar{\phi}(0) < 0$.
	(c) The singular case where $\bar{\lambda}_1 = \lambda_{\text{min}} = 0$.
	(d) The indefinite case where $\bar{\lambda}_1 = \lambda_{\text{min}} < 0$.
	(e) When the coefficients $a_i$ corresponding to $\lambda_{\text{min}}$
	are all 0, $\bar{\phi}(\sigma)$ does not have a singularity at $\lambda_{\text{min}}$.
	Note that this case is not the hard case since $\bar{\phi}(-\lambda_{\min}) < 0$.
	(f) The hard case where there does not exist $\sigma^* > -\lambda_{\text{min}}$
	such that $\bar{\phi}(\sigma^*) = 0$.
	}
\end{figure}

We now define an initial iterate such that Newton's method is guaranteed to converge to $\sigma^*$ monotonically.

\begin{theorem}
Suppose 
$\bar{\phi}(\max\{ 0, -\lambda_{\min} \}) < 0$.
Let
\begin{equation}\label{eqn-hatsigma}
	\hat{\sigma} \defined   \max_{1 \le i \le k+2}\left\{ \frac{|a_i|}{\delta} - \lambda_i \right\} = 
	\frac{|a_j|}{\delta} - \lambda_j
      \end{equation}
for some $1 \le j \le k+2$.
Newton's method applied to $\bar{\phi}(\sigma)$ with initial iterate
$\sigma^{(0)} \defined \max \{ 0, \hat{\sigma} \}$
is guaranteed to converge to $\sigma^*$ monotonically.
\end{theorem}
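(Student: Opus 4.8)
\medskip

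\noindent The plan is to show that the prescribed starting point lands in the interval where Newton's method behaves well, and then invoke the classical behaviour of Newton's iteration on an increasing, concave function approached from the left of its root. Write $\sigma^*$ for the unique root of $\bar{\phi}$ in $(-\bar{\lambda}_1,\infty)$; under the hypothesis $\bar{\phi}(\max\{0,-\lambda_{\min}\})<0$ this coincides with the optimal multiplier, since by monotonicity $\sigma^*>\max\{0,-\lambda_{\min}\}$. Recall from the preceding discussion that on $(-\bar{\lambda}_1,\infty)$ the function $\bar{\phi}$ is $C^2$, strictly increasing ($\bar{\phi}'>0$) and concave ($\bar{\phi}''<0$), with $\bar{\phi}(\sigma)\to-1/\delta$ as $\sigma\to-\bar{\lambda}_1^+$ and $\bar{\phi}(\sigma)\to+\infty$ as $\sigma\to\infty$; hence $\sigma^*$ is well defined and, on this interval, $\bar{\phi}(\sigma)\le 0$ if and only if $\sigma\le\sigma^*$. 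The goal is therefore to prove $\sigma^{(0)}\in(-\bar{\lambda}_1,\sigma^*]$.

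To see $\sigma^{(0)}\le\sigma^*$, note there is an index $i_0$ with $\lambda_{i_0}=\lambda_{\min}$, and since $|a_{i_0}|\ge 0$ we get $\hat{\sigma}\ge |a_{i_0}|/\delta-\lambda_{i_0}\ge-\lambda_{\min}$, so $\sigma^{(0)}=\max\{0,\hat{\sigma}\}\ge\max\{0,-\lambda_{\min}\}$. If $\hat{\sigma}=-\lambda_{\min}$, then $\sigma^{(0)}=\max\{0,-\lambda_{\min}\}$ and the hypothesis gives $\bar{\phi}(\sigma^{(0)})<0$. If $\hat{\sigma}>-\lambda_{\min}$, then any maximizing index $j$ in \eqref{eqn-hatsigma} has $a_j\ne 0$ (otherwise $-\lambda_j=\hat{\sigma}>-\lambda_{\min}\ge-\lambda_j$, a contradiction), so $\lambda_j+\hat{\sigma}=|a_j|/\delta>0$ and, retaining only the $j$-th term of $\|p(\hat{\sigma})\|^2=\sum_i a_i^2/(\lambda_i+\hat{\sigma})^2$, we find $\|p(\hat{\sigma})\|^2\ge a_j^2/(|a_j|/\delta)^2=\delta^2$, i.e. $\bar{\phi}(\hat{\sigma})\le 0$; combined with $0\le\sigma^*$ this gives $\sigma^{(0)}=\max\{0,\hat{\sigma}\}\le\sigma^*$. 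In either case $\bar{\phi}(\sigma^{(0)})\le 0$. For the lower bound $\sigma^{(0)}>-\bar{\lambda}_1$: if $\lambda_{\min}$ corresponds to a zero coefficient then $\bar{\lambda}_1>\lambda_{\min}$, whence $\sigma^{(0)}\ge-\lambda_{\min}>-\bar{\lambda}_1$; otherwise $\bar{\lambda}_1=\lambda_{\min}$, and choosing $i_0$ with $\lambda_{i_0}=\lambda_{\min}$ and $a_{i_0}\ne 0$ yields $\hat{\sigma}\ge|a_{i_0}|/\delta-\lambda_{\min}>-\lambda_{\min}=-\bar{\lambda}_1$, so again $\sigma^{(0)}>-\bar{\lambda}_1$. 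Thus $\sigma^{(0)}\in(-\bar{\lambda}_1,\sigma^*]$.

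Finally I would close by induction. If $\sigma^{(m)}\in(-\bar{\lambda}_1,\sigma^*]$, then $\bar{\phi}(\sigma^{(m)})\le 0$ and $\bar{\phi}'(\sigma^{(m)})>0$, so $\sigma^{(m+1)}=\sigma^{(m)}-\bar{\phi}(\sigma^{(m)})/\bar{\phi}'(\sigma^{(m)})\ge\sigma^{(m)}>-\bar{\lambda}_1$; moreover concavity gives $0=\bar{\phi}(\sigma^*)\le\bar{\phi}(\sigma^{(m)})+\bar{\phi}'(\sigma^{(m)})(\sigma^*-\sigma^{(m)})$, which rearranges to $\sigma^{(m+1)}\le\sigma^*$. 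Hence $\{\sigma^{(m)}\}$ is nondecreasing and bounded above by $\sigma^*$, so it converges to some $\bar{\sigma}\le\sigma^*$; passing to the limit in the Newton recursion (valid since $\bar{\phi}'$ is continuous and positive on $[\sigma^{(0)},\sigma^*]$) yields $\bar{\phi}(\bar{\sigma})=0$, whence $\bar{\sigma}=\sigma^*$. I expect the only genuinely delicate part to be the middle paragraph: verifying that the judicious choice of $\hat{\sigma}$ simultaneously places $\sigma^{(0)}$ strictly to the right of $-\bar{\lambda}_1$ (so $\bar{\phi}$ is smooth, increasing and concave there) and no further than $\sigma^*$ (so Newton cannot overshoot). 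The mechanism is precisely that $\hat{\sigma}$ is engineered so that one term of $\|p(\hat{\sigma})\|^2$ equals exactly $\delta^2$; once this localization is in hand, the monotone convergence is the textbook Newton-on-a-concave-function argument.
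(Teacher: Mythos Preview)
Your proof is correct and follows essentially the same approach as the paper: establish $\hat{\sigma}\ge-\lambda_{\min}$, split into the cases $a_j\ne 0$ (where one term of $\|p(\hat{\sigma})\|^2$ already hits $\delta^2$) and $a_j=0$ (where $\hat{\sigma}=-\lambda_{\min}$ and the hypothesis applies), and then invoke the standard Newton-on-increasing-concave argument. The only notable organizational difference is that you localize $\sigma^{(0)}$ in $(-\bar{\lambda}_1,\sigma^*]$, whereas the paper localizes it in $[-\lambda_{\min},\sigma^*]$ and then separately verifies that $\bar{\phi}'(\sigma^{(0)})$ exists; your framing is marginally cleaner since smoothness on $(-\bar{\lambda}_1,\infty)$ is automatic, and you also spell out the monotone-convergence induction that the paper delegates to a reference.
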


\begin{proof} 
  Since $\bar{\phi}(\sigma)$ is strictly increasing and concave on
  $[-\lambda_{\min}, \infty)$ and $\bar{\phi}(\sigma^*) = 0$, it is
  sufficient to show that (i) $-\lambda_{\min} \le \sigma^{(0)} \le
  \sigma^*$, and (ii) $\bar{\phi}'(\sigma^{(0)})$ exists
  (see e.g., \cite{KinC02}). 
  
\medskip

We note that $\hat{\sigma} \ge -\lambda_{\min}$, and thus, $\sigma^{(0)}
\ge \max \{0, -\lambda_{\min}\} \ge -\lambda_{\min}$.  To show that
$\sigma^{(0)} \le \sigma^*$, we show that $\bar{\phi}(\sigma^{(0)}) \le
\bar{\phi}(\sigma^*) = 0$.

If $\hat{\sigma} = |a_j|/\delta - \lambda_j$
with $|a_j| \ne 0$, then evaluating $\|p(\sigma)\|$ at $\sigma=\hat\sigma$ yields
$$
		\| p(\hat{\sigma}) \|^2 \ = \  \sum_{i=1}^{k+2} \frac{a_i^2}{(\lambda_i + \hat{\sigma})^2} \\
				\ \ge \  \frac{a_j^2}{(\lambda_j + \hat{\sigma})^2}\\
				\ = \ \frac{a_j^2}{(\lambda_j + \frac{|a_j|}{\delta}- \lambda_j)^2}\\
				\ = \ \delta^2,
$$
and thus, $\bar{\phi}(\hat{\sigma})\le 0$.  
Since $\bar{\phi}(\max\{ 0, -\lambda_{\min} \}) < 0$, then $\bar{\phi}(\sigma^{(0)}) \le 0$.
If $|a_j| = 0$, then $\hat{\sigma} = -\lambda_j$.
Since $-\lambda_i \le -\lambda_{\min}$ for all $i$, $\hat{\sigma} = -\lambda_{\min}$.
Thus, $\bar{\phi}(\sigma^{(0)}) =\bar{\phi}(\max\{ 0, -\lambda_{\min} \}) < 0$.
Consequently, $\bar{\phi}(\sigma^{(0)}) \le 0$, and therefore, $\sigma^{(0)} \le \sigma^*$
since $\bar{\phi}(\sigma)$ is monotonically increasing.

Next, we show that $\bar{\phi}'(\sigma^{(0)})$ exists.  
On the interval $( -\lambda_{\min}, \infty)$, $\bar{\phi}(\sigma)$ is differentiable
(see \eqref{eqn-phiprime}). Therefore, if $\sigma^{(0)} > -\lambda_{\min}$, 
then $\bar{\phi}'(\sigma^{(0)})$ exists.  If $\sigma^{(0)} = -\lambda_{\min}$,
then $\hat{\sigma} = -\lambda_{\min}$, which implies that 
$a_1 = \cdots = a_r = 0$ or $a_{k+2}$ = 0 (see \eqref{eqn-hatsigma}).
From the definition of $\bar{\phi}(\sigma)$, $\lambda_{\min} \ne \bar{\lambda}_i$
for $1 \le i \le \ell$.  Thus, $\bar{\phi}(\sigma)$ is differentiable at 
$\sigma = -\lambda_{\min} = \sigma^{(0)}$.  $\square$

\end{proof}

We note that when $a_j \ne 0$ in \eqref{eqn-hatsigma}, $\hat{\sigma}$ is the largest $\sigma$
that solves the secular equation with the infinity norm:
$$
	\phi_{\infty}(\hat{\sigma}) = \frac{1}{\| v(\hat{\sigma}) \|_{\infty} } - \frac{1}{\delta} = 0.
$$
We illustrate the choice of initial iterate for Newton's method in Fig.\ 2.
\begin{figure}[htbp]\label{fig-phi}
	\begin{center}
	\begin{tabular}{cc}
	\includegraphics[width=.47\textwidth]{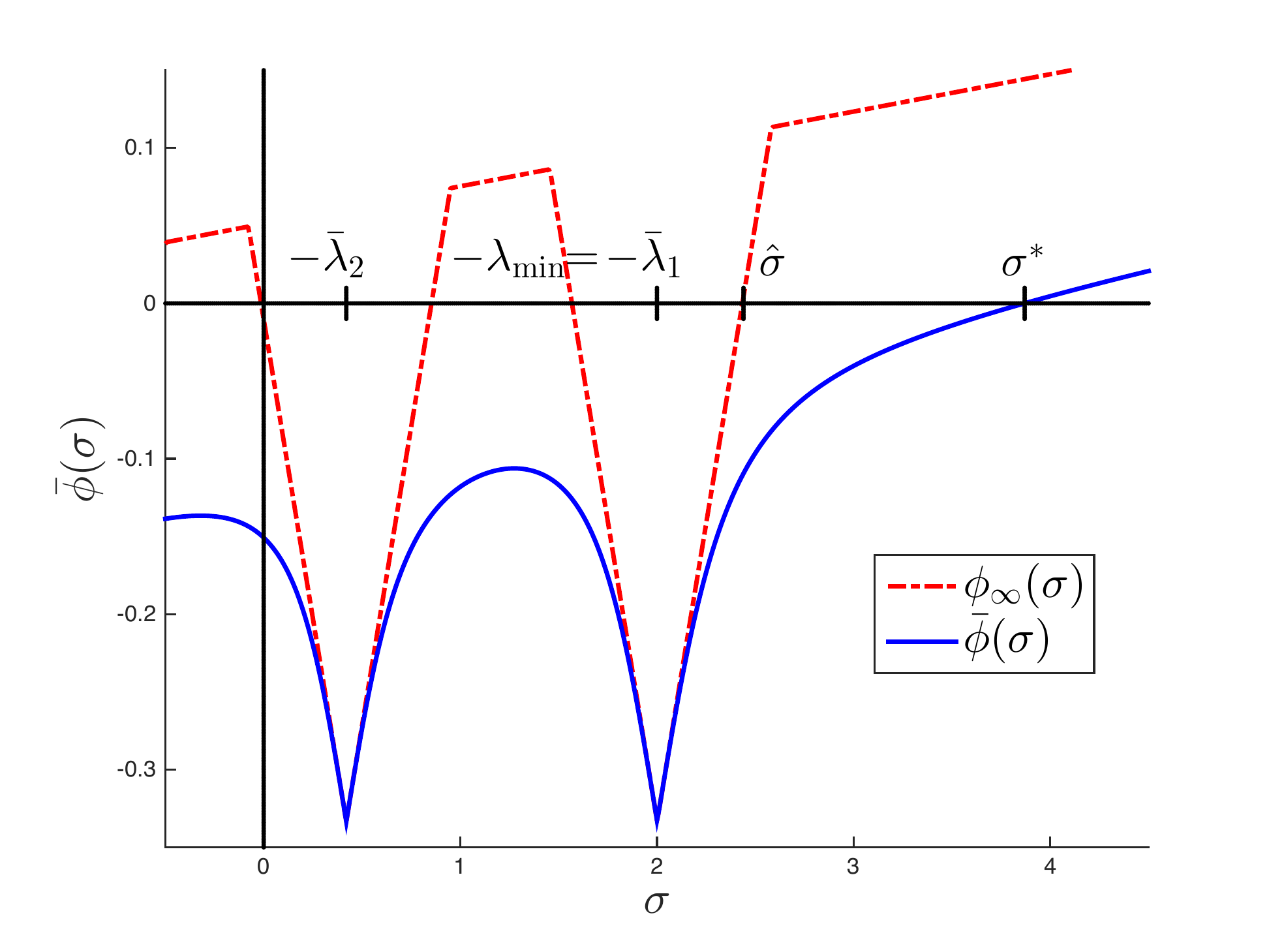} &
	\includegraphics[width=.47\textwidth]{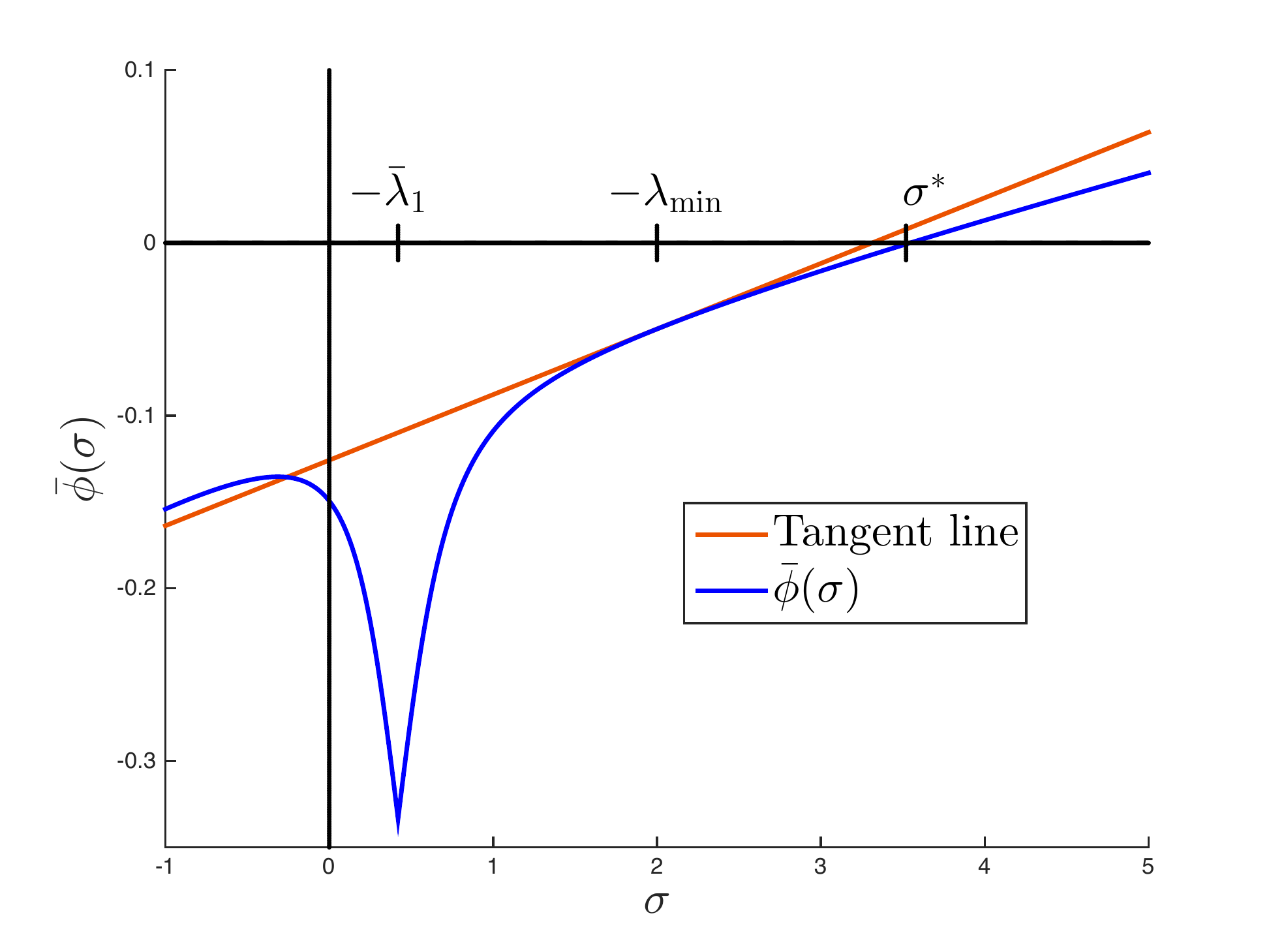} \\
	(a) & (b)\\
	\end{tabular}
	\end{center}
	\caption{Choice of initial iterate for Newton's method.
	(a) If $a_j \ne 0$ in \eqref{eqn-hatsigma}, then $\hat{\sigma}$ corresponds to the 
	largest root of $\phi_{\infty}(\sigma)$ (in red).
	Here, $-\lambda_{\min} > 0$, and therefore $\sigma^{(0)} = \hat{\sigma}$.
	(b) If $a_j = 0$ in \eqref{eqn-hatsigma}, then 
	$\lambda_{\min} \ne \bar{\lambda}_1$, and therefore, 
	$\bar{\phi}(\sigma)$ is differentiable at $-\lambda_{\min}$
	since $\bar{\phi}(\sigma)$ is differentiable on $(-\bar{\lambda}_1,\infty)$.
	Here, $-\lambda_{\min} > 0$, and thus, $\sigma^{(0)} = \hat{\sigma} = -\lambda_{\min}$.
	}
\end{figure}

Finally, we present Newton's method for computing $\sigma^*$.
\begin{algorithm}[!h]
\SetAlgoNoLine
Define tolerance $\tau > 0$;\\
\uIf{$\bar{\phi}(\max \{ 0, -\lambda_{\min}\}) < 0$}{
	$\hat{\sigma} =  \max_{1 \le j \le k+2} \frac{|a_j|}{\delta} - \lambda_j$;\\
	$\sigma = \max \{ 0, \hat{\sigma} \}$;\\
	\While{$|\bar{\phi}(\sigma)| > \tau$}{
		$\sigma = \sigma - \bar{\phi}(\sigma)/\bar{\phi}'(\sigma)$;
	}
	$\sigma^* = \sigma$;
}
\uElseIf{$\lambda_{\min} < 0$}{$\sigma^*=-\lambda_{\min}$;}
\Else{
	$\sigma^* = 0$;
}
\caption{Newton's method for computing $\sigma^*$}\label{alg-initpt}
\end{algorithm}

\medskip

\section{Numerical experiments}
In this section, we demonstrate the accuracy of the proposed \OBS{}
algorithm implemented in \MATLAB{} to solve limited-memory SR1 trust-region
subproblems.  For the experiments, five sets of experiments composed of
problems of various sizes were generated using random data.  The Newton
  method to find a root of $\phi$ was terminated when 
the $i$th iterate satisfied
  $\|\phi(\sigma^{(i)})\|\le \|\phi(\sigma^{(0)})\| + \tau$,
  where $\sigma^{(0)}$ denotes the initial iterate for Newton's method and $\tau
  = 1.0\times 10^{-10}$.  This is the only stopping criteria used by
the \OBS{} method since other
aspects of this method compute solutions by formula.  The problem sizes
$n$ range from $n=10^3$ to $n=10^7$.  The number of limited-memory updates
$k$ was set to 4, and thus $k+1=5$, and $\gamma=0.5$ unless otherwise
specified below. The pairs $S$ and $Y$, both $n\times (k+1)$ matrices, were
generated from random data.  Finally, $g$ was generated by random data
unless otherwise stated.  The five sets of experiments are intended to be
comprehensive: They include the unconstrained case and the three cases
discussed in Section~\ref{sec-proposed}.  The five experiments are as
follows:
\begin{enumerate}

\item The matrix $B$ is positive definite with $\| p_u \| \le \delta$: We ensure $\Psi$
  and $M$ are such that $B$ is strictly positive definite by altering the spectral decomposition of
  $RMR^T$.  We choose $\delta = \mu \| p_u \|$, where $\mu = 1.25$, to guarantee that the
  unconstrained minimizer is feasible.
  The graph of $\bar{\phi}(\sigma)$ corresponding to this case is illustrated in Fig.\ 1(a).

\item The matrix $B$ is positive definite with $\| p_u \| > \delta$: We ensure $\Psi$ and
  $M$ are such that $B$ is strictly positive definite by altering the spectral decomposition of
  $RMR^T$.  We choose $\delta = \mu \| p_u \|$, where $\mu$ is randomly 
  generated between 0 and 1, to guarantee that the unconstrained minimizer is infeasible.
  The graph of $\bar{\phi}(\sigma)$  corresponding to this case is illustrated in Fig.\ 1(b).

\item The matrix $B$ is positive semidefinite and singular with $p =
  -B^{\dagger}g$ infeasible: We ensure $\Psi$ and $M$ are such that $B$ is
  positive semidefinite and singular by altering the spectral decomposition
  of $RMR^T$.  Two cases are tested: (a) $\bar{\phi}(0)<0$ and (b)
  $\bar{\phi}(0)\ge 0$.  Case (a) occurs when $\delta = (1+\mu) \| p_u \|$,
  where $\mu$ is randomly generated between 0 and 1; case (b) occurs when
  $\delta = \mu \| p_u \|$, where $\mu$ is randomly generated between 0 and
  1.  The graph of $\bar{\phi}(\sigma)$ in case (a) corresponds to Fig.\
  1(c).  In case (b), $a_i = 0$ for $i = 1, \dots, r$, and thus,
    $\bar{\phi}(\sigma)$ does not have a singularity at $\sigma=0$, implying the graph
    of $\bar{\phi}(\sigma)$ for this case 
    corresponds to Fig \. 1(a).

\item The matrix $B$ is indefinite with 
  $\bar{\phi}(-\lambda_{\min}) < 0$: We ensure $\Psi$ and $M$ are
  such that $B$ is indefinite by altering the spectral decomposition of
  $RMR^T$. We test two subcases: (a) the vector $g$ is generated randomly,
  and (b) a random vector $g$ is projected onto the orthogonal complement of
  ${P_{\parallel}}_1 \in \Re^{n \times r}$ so that $a_i = 0, i=1,\ldots, r$,
  where $r=2$.  For case (b), $\delta=\mu\|p_u\|$, where
  $\mu$ is randomly generated between 0 and 1, so that 
  $\bar{\phi}(-\lambda_{\min}) < 0$.
  The graph of $\bar{\phi}(\sigma)$ in case (a) corresponds to Fig.\ 1(d), and $\bar{\phi}(\sigma)$
  in case (b) corresponds to Fig.\ 1(e).

 \item The hard case ($B$ is indefinite): We ensure $\Psi$ and $M$ are
  such that $B$ is indefinite by altering the spectral decomposition of
  $RMR^T$.  We test two subcases:
  (a) $ \lambda_{\min} =\lambda_1 =\hat{\lambda}_1+\gamma < 0 $,
  and (b) $ \lambda_{\min} = \gamma < 0 $.  In both cases, $\delta=(1+\mu)\|p_u\|$, where
  $\mu$ is randomly generated between 0 and 1, so that 
  $\bar{\phi}(-\lambda_{\min}) > 0$.
  The graph of $\bar{\phi}(\sigma)$ for both cases of the hard case corresponds to Fig.\ 1(f).

\end{enumerate}

We report the following: (1) \texttt{opt 1 (abs)}=$\| (B+\sigma^*I)p^* + g\|$, which corresponds
to the norm of the error in the first optimality conditions; 
(2) \texttt{opt 1 (rel)}
=$(\| (B+\sigma^*I)p^* + g\|)/\|g\|$, which corresponds
to the norm of the \emph{relative} error in the first optimality conditions;
(3) \texttt{opt 2}=$\sigma^*
|p^* - \delta |$, which corresponds to the absolute error in the second
optimality conditions; (4) $| \phi (\sigma^*) |$, which
measures how well the secular equation is satisfied; and (5) Time.  We ran
each experiment five times and report one representative result for each
experiment.  We show in Fig. 3 the computational time
for each of the five runs in each experiment.

\medskip

For comparison, we report results for the \OBS{} method as well as the
\LSTRS{} method~\cite{rojas2001,Rojas2008}.  The \LSTRS{} method solves
large trust-region subproblems by converting the subproblems into
parametrized eigenvalue problems.  This method uses only matrix-vector
products.  For these tests, we suppressed all run-time output of the
\LSTRS{} method and supplied a routine to compute matrix-vector products
using the factors in the compact formulation (see (\ref{eqn-Bk})), i.e.,
given a vector $v$, the product with $B$ is computed as $Bv\gets\gamma v +
\Psi(M(\Psi^Tv)).$ Note that the computations of $M$ and $\Psi$ are not
included in the time counts for \LSTRS.

\setlength\tabcolsep{1.5mm}
\begin{table}[!h]

  \caption{   Experiment 1: \OBS{} method with $B$ is
      positive definite and $\|p_u\|\le
      \delta$.} {
\centering
			\begin{tabular}{|c|c|c|c|c|c|}
                     \hline $n$   	& \texttt{opt 1 (abs)} &  \texttt{opt 1 (rel)} &  \texttt{opt 2} & $\sigma^* $ & Time  \\ 
\hline
 \texttt{1.0e+03} &  \texttt{3.24e-15}&  \texttt{ 1.03e-16} &     \texttt{0.00e+00}&    \texttt{0.00e+00} & \texttt{2.12e-02} \\ 
\hline
 \texttt{1.0e+04} &  \texttt{1.21e-14}&  \texttt{ 1.21e-16} &     \texttt{0.00e+00}&    \texttt{0.00e+00} & \texttt{2.76e-02} \\ 
\hline
 \texttt{1.0e+05} &  \texttt{4.61e-14}&  \texttt{ 1.46e-16} &     \texttt{0.00e+00}&    \texttt{0.00e+00} & \texttt{5.46e-02} \\ 
\hline
 \texttt{1.0e+06} &  \texttt{1.08e-13}&  \texttt{ 1.08e-16} &     \texttt{0.00e+00}&    \texttt{0.00e+00} & \texttt{5.34e-01} \\ 
\hline
 \texttt{1.0e+07} &  \texttt{5.31e-13}&  \texttt{ 1.68e-16} &     \texttt{0.00e+00}&    \texttt{0.00e+00} & \texttt{5.34e+00} \\ 
\hline
                      \end{tabular} }
\end{table}

\setlength\tabcolsep{1.5mm}
\begin{table}[!h]

  \caption{   Experiment 1: \LSTRS{} method with $B$ is
      positive definite and $\|p_u\|\le
      \delta$.} {
\centering
			\begin{tabular}{|c|c|c|c|c|c|}
                     \hline $n$   	& \texttt{opt 1 (abs)} &  \texttt{opt 1 (rel)} &  \texttt{opt 2} & $\sigma^* $ & Time  \\ 
\hline
 \texttt{1.0e+03} &  \texttt{2.11e-05}&  \texttt{ 6.70e-07} &     \texttt{0.00e+00}&    \texttt{0.00e+00} & \texttt{4.72e-01} \\ 
\hline
 \texttt{1.0e+04} &  \texttt{8.27e-07}&  \texttt{ 8.28e-09} &     \texttt{0.00e+00}&    \texttt{0.00e+00} & \texttt{4.98e-01} \\ 
\hline
 \texttt{1.0e+05} &  \texttt{2.64e-07}&  \texttt{ 8.37e-10} &     \texttt{0.00e+00}&    \texttt{0.00e+00} & \texttt{9.15e-01} \\ 
\hline
 \texttt{1.0e+06} &  \texttt{3.54e-09}&  \texttt{ 3.53e-12} &     \texttt{0.00e+00}&    \texttt{0.00e+00} & \texttt{7.08e+00} \\ 
\hline
 \texttt{1.0e+07} &  \texttt{2.79e-09}&  \texttt{ 8.81e-13} &     \texttt{0.00e+00}&    \texttt{0.00e+00} & \texttt{6.66e+01} \\ 
\hline
                      \end{tabular} }
\end{table}

Tables 1 and 2 shows the results of Experiment 1.  In all cases, the \OBS{}
method and the \LSTRS{} method found global solutions of the trust-region
subproblems.  The relative error in the \OBS{} method is smaller than the
relative error in the \LSTRS{} method.  Moreover, the \OBS{} method solved
each subproblem in less time than the \LSTRS{} method.

\begin{table}[!h]
  \caption{ Experiment 2: \OBS{} method with $ B$ is positive definite
    and $\|p_u\| > \delta$.} {
    \centering
    \begin{tabular}{|c|c|c|c|c|c|}
      \hline $n$   	& \texttt{opt 1 (abs)} &  \texttt{opt 1 (rel)} &  \texttt{opt 2} & $\sigma^* $ & Time  \\ 
      \hline
      \texttt{1.0e+03} &  \texttt{3.44e-15}&  \texttt{ 1.06e-16} &     \texttt{1.75e-09}&    \texttt{4.82e+01} & \texttt{2.83e-02} \\ 
      \hline \texttt{1.0e+04} &  \texttt{1.35e-14}&  \texttt{ 1.35e-16} &     \texttt{5.83e-13}&    \texttt{1.99e+01} & \texttt{2.70e-02} \\ 
      \hline \texttt{1.0e+05} &  \texttt{3.34e-14}&  \texttt{ 1.06e-16} &     \texttt{6.15e-13}&    \texttt{1.57e+01} & \texttt{6.39e-02} \\ 
      \hline \texttt{1.0e+06} &  \texttt{9.58e-14}&  \texttt{ 9.58e-17} &     \texttt{1.30e-11}&    \texttt{7.06e+01} & \texttt{5.38e-01} \\ 
      \hline \texttt{1.0e+07} &  \texttt{4.49e-13}&  \texttt{ 1.42e-16} &     \texttt{5.39e-06}&    \texttt{1.08e+00} & \texttt{5.37e+00} \\ 
      \hline
	\end{tabular} }
\end{table}

\begin{table}[!h]
  \caption{ Experiment 2: \LSTRS{} method with $ B$ is positive definite
    and $\|p_u\| > \delta$.} {
    \centering
    \begin{tabular}{|c|c|c|c|c|c|}
      \hline $n$   	& \texttt{opt 1 (abs)} &  \texttt{opt 1 (rel)} &  \texttt{opt 2} & $\sigma^* $ & Time  \\ 
\hline
 \texttt{1.0e+03} &  \texttt{1.32e-14}&  \texttt{ 4.05e-16} &     \texttt{6.25e-04}&    \texttt{4.82e+01} & \texttt{4.44e-01} \\ 
 \hline \texttt{1.0e+04} &  \texttt{1.20e-13}&  \texttt{ 1.20e-15} &     \texttt{1.20e-03}&    \texttt{1.99e+01} & \texttt{4.80e-01} \\ 
 \hline \texttt{1.0e+05} &  \texttt{5.45e-11}&  \texttt{ 1.73e-13} &     \texttt{4.90e-04}&    \texttt{1.57e+01} & \texttt{7.30e-01} \\ 
 \hline \texttt{1.0e+06} &  \texttt{4.68e-10}&  \texttt{ 4.68e-13} &     \texttt{1.35e-06}&    \texttt{7.06e+01} & \texttt{4.56e+00} \\ 
 \hline \texttt{1.0e+07} &  \texttt{4.15e-05}&  \texttt{ 1.31e-08} &     \texttt{4.47e-05}&    \texttt{1.08e+00} & \texttt{4.21e+01} \\ 
      \hline
	\end{tabular} }
\end{table}

Tables 3 and 4 show the results of Experiment 2.  In this case, the
unconstrained minimizer is not inside the trust region, making the value of
$\sigma^*$ strictly positive.  As in the first
experiment, the \OBS{} method appears to obtain solutions to higher
accuracy (columns 1, 2, and 3) and in less time (column 4) than the
\LSTRS{} method.  Finally, it is worth noting that as $n$ increases, the
accuracy of the solutions obtained by the \LSTRS{} method appears to
degrade.

\begin{table}[!h]
	\caption{ Experiment 3(a): \OBS{} method with $B$ is positive semidefinite and
singular with $\|B^\dagger g\| > \delta$.}  {
\centering
	\begin{tabular}{|c|c|c|c|c|c|}
      \hline $n$   	& \texttt{opt 1 (abs)} &  \texttt{opt 1 (rel)} &  \texttt{opt 2} & $\sigma^* $ & Time  \\ 
\hline
 \texttt{1.0e+03} &  \texttt{2.80e-14}&  \texttt{ 8.89e-16} &     \texttt{6.25e-10}&    \texttt{3.38e-01} & \texttt{2.70e-02} \\ 
\hline \texttt{1.0e+04} &  \texttt{1.17e-13}&  \texttt{ 1.16e-15} &     \texttt{1.18e-08}&    \texttt{1.03e-01} & \texttt{3.36e-02} \\ 
\hline \texttt{1.0e+05} &  \texttt{3.48e-12}&  \texttt{ 1.10e-14} &     \texttt{2.16e-07}&    \texttt{8.75e-03} & \texttt{6.43e-02} \\ 
\hline \texttt{1.0e+06} &  \texttt{1.44e-11}&  \texttt{ 1.44e-14} &     \texttt{1.48e-09}&    \texttt{3.62e-03} & \texttt{5.44e-01} \\ 
\hline \texttt{1.0e+07} &  \texttt{5.52e-10}&  \texttt{ 1.74e-13} &     \texttt{8.96e-09}&    \texttt{2.88e-03} & \texttt{5.39e+00} \\ 
					\hline
	\end{tabular} }
\end{table}
\begin{table}[!h]
	\caption{ Experiment 3(a): \LSTRS{} method with $B$ is positive semidefinite and
singular with $\|B^\dagger g\| > \delta$.}  {
\centering
	\begin{tabular}{|c|c|c|c|c|c|}
      \hline $n$   	& \texttt{opt 1 (abs)} &  \texttt{opt 1 (rel)} &  \texttt{opt 2} & $\sigma^* $ & Time  \\ 
 \hline\texttt{1.0e+03} &  \texttt{9.75e-03}&  \texttt{ 3.10e-04} &     \texttt{1.51e-16}&    \texttt{3.41e-01} & \texttt{4.78e-01} \\ 
 \hline \texttt{1.0e+04} &  \texttt{7.93e-02}&  \texttt{ 7.91e-04} &     \texttt{2.65e-15}&    \texttt{1.07e-01} & \texttt{5.69e-01} \\ 
 \hline \texttt{1.0e+05} &  \texttt{1.85e-01}&  \texttt{ 5.84e-04} &     \texttt{8.16e-16}&    \texttt{9.57e-03} & \texttt{1.56e+00} \\ 
 \hline \texttt{1.0e+06} &  \texttt{1.29e-01}&  \texttt{ 1.29e-04} &     \texttt{6.04e-16}&    \texttt{1.70e-03} & \texttt{1.28e+01} \\ 
 \hline \texttt{1.0e+07} &  \texttt{2.24e+03}&  \texttt{ 7.09e-01} &     \texttt{1.05e-10}&    \texttt{1.30e-06} & \texttt{6.39e+01} \\ 
					\hline
	\end{tabular} }
\end{table}

Tables 5 and 6 display the results of Experiment 3(a).  
This is experiment is the first of two in which $B$ is highly ill-conditioned.
In this experiment,
the \LSTRS{} method appears unable to obtain solutions to high absolute
accuracy (see column 2 in Table 6).  Moreover, the time required by the
\LSTRS{} to obtain solutions is, in some cases, significantly more than the
time required by the 
\OBS{} method.  In contrast, the \OBS{} method is able to obtain high
accuracy solutions.  Notice that the optimal values $\sigma^*$ found by
both methods appear to differ.  Global solutions to the subproblems solved
in Experiment 3(a) lie on the boundary of the trust region.  Because
\LSTRS{} was able to satisfy the second optimality condition to high
accuracy but not the first, this suggests \LSTRS's solution $p^*$ lies
on the boundary but there is some error in this solution.  As $n$
increases, the solution quality of the \LSTRS{} method appears to decline
with significant error in the case of $n=10^7$.  In this experiment, the
\OBS{} method appears to find solutions to high accuracy in comparable
time to other experiments; in contrast, the \LSTRS{} method appears to have
difficulty finding global solutions.

\begin{table}[!h]
	\caption{ Experiment 3(b): \OBS{} method with $B$ is positive semidefinite and
singular with $\|B^\dagger g\| \le \delta$.}  {
\centering
	\begin{tabular}{|c|c|c|c|c|c|}
      \hline $n$   	& \texttt{opt 1 (abs)} &  \texttt{opt 1 (rel)} &  \texttt{opt 2} & $\sigma^* $ & Time  \\ 
\hline
 \texttt{1.0e+03} &  \texttt{4.10e-15}&  \texttt{ 1.34e-16} &     \texttt{9.05e-10}&    \texttt{4.85e+01} & \texttt{3.01e-02} \\ 
\hline \texttt{1.0e+04} &  \texttt{1.01e-14}&  \texttt{ 1.02e-16} &     \texttt{1.34e-11}&    \texttt{6.98e+00} & \texttt{4.36e-02} \\ 
\hline \texttt{1.0e+05} &  \texttt{3.03e-14}&  \texttt{ 9.55e-17} &     \texttt{7.99e-14}&    \texttt{2.25e+01} & \texttt{6.70e-02} \\ 
\hline \texttt{1.0e+06} &  \texttt{1.39e-13}&  \texttt{ 1.39e-16} &     \texttt{4.18e-12}&    \texttt{3.42e+00} & \texttt{5.41e-01} \\ 
\hline \texttt{1.0e+07} &  \texttt{3.46e-13}&  \texttt{ 1.09e-16} &     \texttt{1.28e-11}&    \texttt{1.08e+00} & \texttt{5.37e+00} \\ 
					\hline
	\end{tabular} }
\end{table}

\begin{table}[!h]
	\caption{ Experiment 3(b): \LSTRS{} method with $B$ is positive semidefinite and
singular with $\|B^\dagger g\| \le \delta$.}  {
\centering
	\begin{tabular}{|c|c|c|c|c|c|}
      \hline $n$   	& \texttt{opt 1 (abs)} &  \texttt{opt 1 (rel)} &  \texttt{opt 2} & $\sigma^* $ & Time  \\ 
\hline \texttt{1.0e+03} &  \texttt{9.40e-15}&  \texttt{ 2.97e-16} &     \texttt{8.19e-04}&    \texttt{4.85e+01} & \texttt{4.42e-01} \\ 
\hline \texttt{1.0e+04} &  \texttt{2.06e-12}&  \texttt{ 2.07e-14} &     \texttt{6.59e-04}&    \texttt{6.98e+00} & \texttt{4.79e-01} \\ 
\hline \texttt{1.0e+05} &  \texttt{1.69e-11}&  \texttt{ 5.34e-14} &     \texttt{4.27e-05}&    \texttt{2.25e+01} & \texttt{7.43e-01} \\ 
\hline \texttt{1.0e+06} &  \texttt{6.27e-08}&  \texttt{ 6.28e-11} &     \texttt{6.19e-05}&    \texttt{3.42e+00} & \texttt{4.60e+00} \\ 
\hline \texttt{1.0e+07} &  \texttt{4.28e-05}&  \texttt{ 1.35e-08} &     \texttt{2.59e-05}&    \texttt{1.08e+00} & \texttt{6.29e+01} \\ 
					\hline
	\end{tabular} }
\end{table}

The results for Experiment 3(b) are shown in Tables 7 and 8.  This is the
second experiment involving ill-conditioned matrices.  As with Experiment
3(a), the \OBS{} method is able to obtain high-accuracy solutions in
generally less time than the \LSTRS{} method.  The accuracy obtained by the
\LSTRS{} method appears to degrade as the size of the problem increases.
In this experiment, the global solution always lies on the boundary, but the
larger residuals associated the second optimality condition in Table 8
indicate that the computed solutions by \LSTRS{} do not lie on the
boundary.


\begin{table}[!h]
  \caption{ Experiment 4(a): \OBS{} method with $ B $ is indefinite with 
  $\bar{\phi}(-\lambda_{\min}) < 0$.
  The vector $g$ is randomly generated. } {
  \centering
					\begin{tabular}{|c|c|c|c|c|c|}
      \hline $n$   	& \texttt{opt 1 (abs)} &  \texttt{opt 1 (rel)} &  \texttt{opt 2} & $\sigma^* $ & Time  \\ 
\hline
 \texttt{1.0e+03} &  \texttt{2.83e-15}&  \texttt{ 9.04e-17} &     \texttt{3.57e-12}&    \texttt{1.89e+02} & \texttt{3.05e-02} \\ 
\hline \texttt{1.0e+04} &  \texttt{1.27e-14}&  \texttt{ 1.27e-16} &     \texttt{1.53e-09}&    \texttt{1.18e+02} & \texttt{3.99e-02} \\ 
\hline \texttt{1.0e+05} &  \texttt{3.42e-14}&  \texttt{ 1.08e-16} &     \texttt{9.15e-13}&    \texttt{3.92e+02} & \texttt{6.40e-02} \\ 
\hline \texttt{1.0e+06} &  \texttt{1.19e-13}&  \texttt{ 1.20e-16} &     \texttt{4.79e-12}&    \texttt{5.39e+03} & \texttt{5.43e-01} \\ 
\hline \texttt{1.0e+07} &  \texttt{3.46e-13}&  \texttt{ 1.09e-16} &     \texttt{8.18e-11}&    \texttt{1.94e+04} & \texttt{5.35e+00} \\ 
					\hline
					\end{tabular}  }
\end{table}


\begin{table}[!h]
  \caption{ Experiment 4(a): \LSTRS{} method with $ B $ is indefinite with 
  $\bar{\phi}(-\lambda_{\min}) < 0$.
  The vector $g$ is randomly generated. } {
  \centering
					\begin{tabular}{|c|c|c|c|c|c|}
      \hline $n$   	& \texttt{opt 1 (abs)} &  \texttt{opt 1 (rel)} &  \texttt{opt 2} & $\sigma^* $ & Time  \\ 
\hline
 \texttt{1.0e+03} &  \texttt{4.92e-14}&  \texttt{ 1.57e-15} &     \texttt{5.40e-04}&    \texttt{1.89e+02} & \texttt{4.40e-01} \\ 
\hline \texttt{1.0e+04} &  \texttt{2.82e-14}&  \texttt{ 2.79e-16} &     \texttt{1.03e-03}&    \texttt{1.18e+02} & \texttt{4.80e-01} \\ 
\hline \texttt{1.0e+05} &  \texttt{2.11e-13}&  \texttt{ 6.69e-16} &     \texttt{2.68e-06}&    \texttt{3.92e+02} & \texttt{7.24e-01} \\ 
\hline \texttt{1.0e+06} &  \texttt{2.93e-11}&  \texttt{ 2.94e-14} &     \texttt{1.38e-07}&    \texttt{5.39e+03} & \texttt{4.49e+00} \\ 
\hline \texttt{1.0e+07} &  \texttt{1.81e-10}&  \texttt{ 5.74e-14} &     \texttt{3.19e-10}&    \texttt{1.94e+04} & \texttt{4.12e+01} \\ 
					\hline
					\end{tabular}  }
\end{table}

The results for Experiment 4(a) are displayed in Tables 9 and 10.  Both
methods found solutions that satisfied the first optimality conditions to
high accuracy.  The overall solution quality from the \OBS{} method appears
better in the sense that the residuals for both optimality conditions in
Table 9 are smaller than the residuals for both optimality conditions in
Table 10.  Finally, the \OBS{} method took less time to solve the
subproblem than the \LSTRS{} method.

\begin{table}[!h]
  \caption{ Experiment 4(b): \OBS{} method with $ B $ is indefinite with 
  $\bar{\phi}(-\lambda_{\min}) < 0$.
  The vector $g$ lies in the orthogonal complement of ${P_{\parallel}}_1$. } {
  \centering
					\begin{tabular}{|c|c|c|c|c|c|}
      \hline $n$   	& \texttt{opt 1 (abs)} &  \texttt{opt 1 (rel)} &  \texttt{opt 2} & $\sigma^* $ & Time  \\ 
\hline \texttt{1.0e+03} &  \texttt{3.42e-15}&  \texttt{ 1.07e-16} &     \texttt{1.17e-09}&    \texttt{1.31e+01} & \texttt{2.91e-02} \\ 
\hline \texttt{1.0e+04} &  \texttt{1.38e-14}&  \texttt{ 1.38e-16} &     \texttt{1.50e-14}&    \texttt{2.81e+00} & \texttt{3.16e-02} \\ 
\hline \texttt{1.0e+05} &  \texttt{3.17e-14}&  \texttt{ 1.00e-16} &     \texttt{3.55e-13}&    \texttt{1.82e+01} & \texttt{6.66e-02} \\ 
\hline \texttt{1.0e+06} &  \texttt{1.30e-13}&  \texttt{ 1.30e-16} &     \texttt{1.76e-12}&    \texttt{4.76e+00} & \texttt{5.46e-01} \\ 
\hline \texttt{1.0e+07} &  \texttt{3.14e-13}&  \texttt{ 9.94e-17} &     \texttt{4.36e-11}&    \texttt{7.58e+01} & \texttt{5.36e+00} \\ 
					\hline
					\end{tabular}  }
      \end{table}

\begin{table}[!h]
  \caption{ Experiment 4(b): \LSTRS{} method with $ B $ is indefinite with 
  $\bar{\phi}(-\lambda_{\min}) < 0$.
  The vector $g$ lies in the orthogonal complement of ${P_{\parallel}}_1$. } {
  \centering
					\begin{tabular}{|c|c|c|c|c|c|}
      \hline $n$   	& \texttt{opt 1 (abs)} &  \texttt{opt 1 (rel)} &  \texttt{opt 2} & $\sigma^* $ & Time  \\ 
\hline \texttt{1.0e+03} &  \texttt{1.16e-14}&  \texttt{ 3.64e-16} &     \texttt{1.24e-03}&    \texttt{1.31e+01} & \texttt{4.42e-01} \\ 
\hline \texttt{1.0e+04} &  \texttt{2.48e-12}&  \texttt{ 2.49e-14} &     \texttt{1.02e-04}&    \texttt{2.81e+00} & \texttt{4.70e-01} \\ 
\hline \texttt{1.0e+05} &  \texttt{1.50e-10}&  \texttt{ 4.75e-13} &     \texttt{2.82e-04}&    \texttt{1.82e+01} & \texttt{7.30e-01} \\ 
\hline \texttt{1.0e+06} &  \texttt{1.65e-08}&  \texttt{ 1.65e-11} &     \texttt{9.70e-05}&    \texttt{4.76e+00} & \texttt{4.65e+00} \\ 
\hline \texttt{1.0e+07} &  \texttt{2.08e-07}&  \texttt{ 6.58e-11} &     \texttt{1.06e-05}&    \texttt{7.58e+01} & \texttt{4.21e+01} \\ 
\hline
					\end{tabular}  }
      \end{table}

      The results of Experiment 4(b) are in Tables 11 and 12.  Both methods
      solved the subproblem to high accuracy as measured by the first
      optimality condition; however, the \OBS{} method solved the
      subproblem to significantly better accuracy as measured by the second
      optimality condition than the \LSTRS{} method.  All residual
      associated with the first and second optimality condition are less
      for the solution obtained by the \OBS{} method.  Moreover, the time
      required to find solutions was less for the \OBS{} method.

\begin{table}[!h]
  \caption{ Experiment 5(a): The \OBS{} method in the hard case ($B$ is indefinite) and $\lambda_{\min} =\lambda_1 =\hat{\lambda}_1+\gamma < 0$.}{
\centering
					\begin{tabular}{|c|c|c|c|c|c|}
      \hline $n$   	& \texttt{opt 1 (abs)} &  \texttt{opt 1 (rel)} &  \texttt{opt 2} & $\sigma^* $ & Time  \\ 
\hline \texttt{1.0e+03} &  \texttt{1.29e-14}&  \texttt{ 4.34e-16} &     \texttt{1.93e-16}&    \texttt{4.35e-01} & \texttt{3.38e-02} \\ 
\hline \texttt{1.0e+04} &  \texttt{5.87e-14}&  \texttt{ 5.86e-16} &     \texttt{2.59e-14}&    \texttt{6.08e-01} & \texttt{2.73e-02} \\ 
\hline \texttt{1.0e+05} &  \texttt{2.34e-12}&  \texttt{ 7.43e-15} &     \texttt{5.79e-14}&    \texttt{8.15e+00} & \texttt{8.08e-02} \\ 
\hline \texttt{1.0e+06} &  \texttt{1.33e-11}&  \texttt{ 1.33e-14} &     \texttt{1.19e-12}&    \texttt{3.97e+00} & \texttt{6.72e-01} \\ 
\hline \texttt{1.0e+07} &  \texttt{1.67e-10}&  \texttt{ 5.28e-14} &     \texttt{4.43e-12}&    \texttt{5.27e-01} & \texttt{6.71e+00} \\ 
					\hline
					\end{tabular}  } 
\end{table}

\begin{table}[!h]
  \caption{ Experiment 5(a): The \LSTRS{} method in the hard case ($B$ is indefinite) and $\lambda_{\min} =\lambda_1 =\hat{\lambda}_1+\gamma < 0$.}{
\centering
					\begin{tabular}{|c|c|c|c|c|c|}
      \hline $n$   	& \texttt{opt 1 (abs)} &  \texttt{opt 1 (rel)} &  \texttt{opt 2} & $\sigma^* $ & Time  \\ 
\hline \texttt{1.0e+03} &  \texttt{2.10e-05}&  \texttt{ 7.07e-07} &     \texttt{1.16e-15}&    \texttt{4.35e-01} & \texttt{4.70e-01} \\ 
\hline \texttt{1.0e+04} &  \texttt{3.88e+00}&  \texttt{ 3.87e-02} &     \texttt{1.50e-03}&    \texttt{6.08e-01} & \texttt{4.71e-01} \\ 
\hline \texttt{1.0e+05} &  \texttt{1.27e+02}&  \texttt{ 4.01e-01} &     \texttt{5.72e-04}&    \texttt{8.15e+00} & \texttt{7.65e-01} \\ 
\hline \texttt{1.0e+06} &  \texttt{2.04e+02}&  \texttt{ 2.04e-01} &     \texttt{1.45e-04}&    \texttt{3.97e+00} & \texttt{4.59e+00} \\ 
\hline \texttt{1.0e+07} &  \texttt{1.64e+03}&  \texttt{ 5.17e-01} &     \texttt{2.30e-05}&    \texttt{5.27e-01} & \texttt{4.23e+01} \\ 
					\hline
					\end{tabular}  } 
\end{table}

In the hard case with $\lambda_{\min}$ being a nontrivial eigenvalue, the
\OBS{} method was obtain global solutions to the subproblems; however, the
\LSTRS{} had difficulty finding high-accuracy solutions for all problem
sizes.  In particular, as $n$ increases, the solution quality of the
\LSTRS{} method appears to decline with significant error in the case of
$n=10^7$.  In all cases, the time required by the \OBS{} method to find a
solution was less than that of the time required by the \LSTRS{} method.

\begin{table}[!h]
\caption{ Experiment 5(b): The \OBS{} method in the hard case ($B$ is indefinite) and $ \lambda_{\min} = \gamma <0$. }   {
\centering
					\begin{tabular}{|c|c|c|c|c|c|}
      \hline $n$   	& \texttt{opt 1 (abs)} &  \texttt{opt 1 (rel)} &  \texttt{opt 2} & $\sigma^* $ & Time  \\ 
                                          \hline
\hline \texttt{1.0e+03} &  \texttt{3.52e-15}&  \texttt{ 1.11e-16} &     \texttt{3.53e-09}&    \texttt{6.35e+01} & \texttt{2.93e-02} \\ 
\hline \texttt{1.0e+04} &  \texttt{9.50e-15}&  \texttt{ 9.48e-17} &     \texttt{1.16e-14}&    \texttt{2.10e+02} & \texttt{3.82e-02} \\ 
\hline \texttt{1.0e+05} &  \texttt{3.01e-14}&  \texttt{ 9.50e-17} &     \texttt{4.49e-13}&    \texttt{4.49e+02} & \texttt{6.71e-02} \\ 
\hline \texttt{1.0e+06} &  \texttt{9.48e-14}&  \texttt{ 9.47e-17} &     \texttt{6.86e-12}&    \texttt{1.34e+04} & \texttt{5.32e-01} \\ 
\hline \texttt{1.0e+07} &  \texttt{3.40e-13}&  \texttt{ 1.07e-16} &     \texttt{2.97e-12}&    \texttt{8.91e+03} & \texttt{5.36e+00} \\ 
\hline					\end{tabular} }
\end{table}

\begin{table}[!h]
\caption{ Experiment 5(b): The \LSTRS{} method in the hard case ($B$ is indefinite) and $ \lambda_{\min} = \gamma <0$. }   {
\centering
					\begin{tabular}{|c|c|c|c|c|c|}
      \hline $n$   	& \texttt{opt 1 (abs)} &  \texttt{opt 1 (rel)} &  \texttt{opt 2} & $\sigma^* $ & Time  \\ 
                                          \hline
 \texttt{1.0e+03} &  \texttt{2.24e-14}&  \texttt{ 7.12e-16} &     \texttt{7.36e-04}&    \texttt{6.35e+01} & \texttt{4.41e-01} \\ 
\hline \texttt{1.0e+04} &  \texttt{6.35e-14}&  \texttt{ 6.33e-16} &     \texttt{1.92e-06}&    \texttt{2.10e+02} & \texttt{5.02e-01} \\ 
\hline \texttt{1.0e+05} &  \texttt{2.26e-13}&  \texttt{ 7.14e-16} &     \texttt{5.09e-08}&    \texttt{4.49e+02} & \texttt{7.49e-01} \\ 
\hline \texttt{1.0e+06} &  \texttt{6.61e-12}&  \texttt{ 6.61e-15} &     \texttt{4.76e-08}&    \texttt{1.34e+04} & \texttt{4.32e+00} \\ 
\hline \texttt{1.0e+07} &  \texttt{8.77e-11}&  \texttt{ 2.77e-14} &     \texttt{1.05e-08}&    \texttt{8.91e+03} & \texttt{4.09e+01} \\ 
\hline					\end{tabular} }
\end{table}

The results of Experiment 5(b) are in Tables 15 and 16.  Unlike in
Experiment 5(a), the \LSTRS{} method was able to find solutions to high
accuracy.  In all cases, the \OBS{} method was able to find solutions with
higher accuracy than the \LSTRS{} method and in less time.

\begin{figure}[!h]
\label{fig:time}
\centering
\begin{tabular}{cc}
\includegraphics[height=3.9cm]{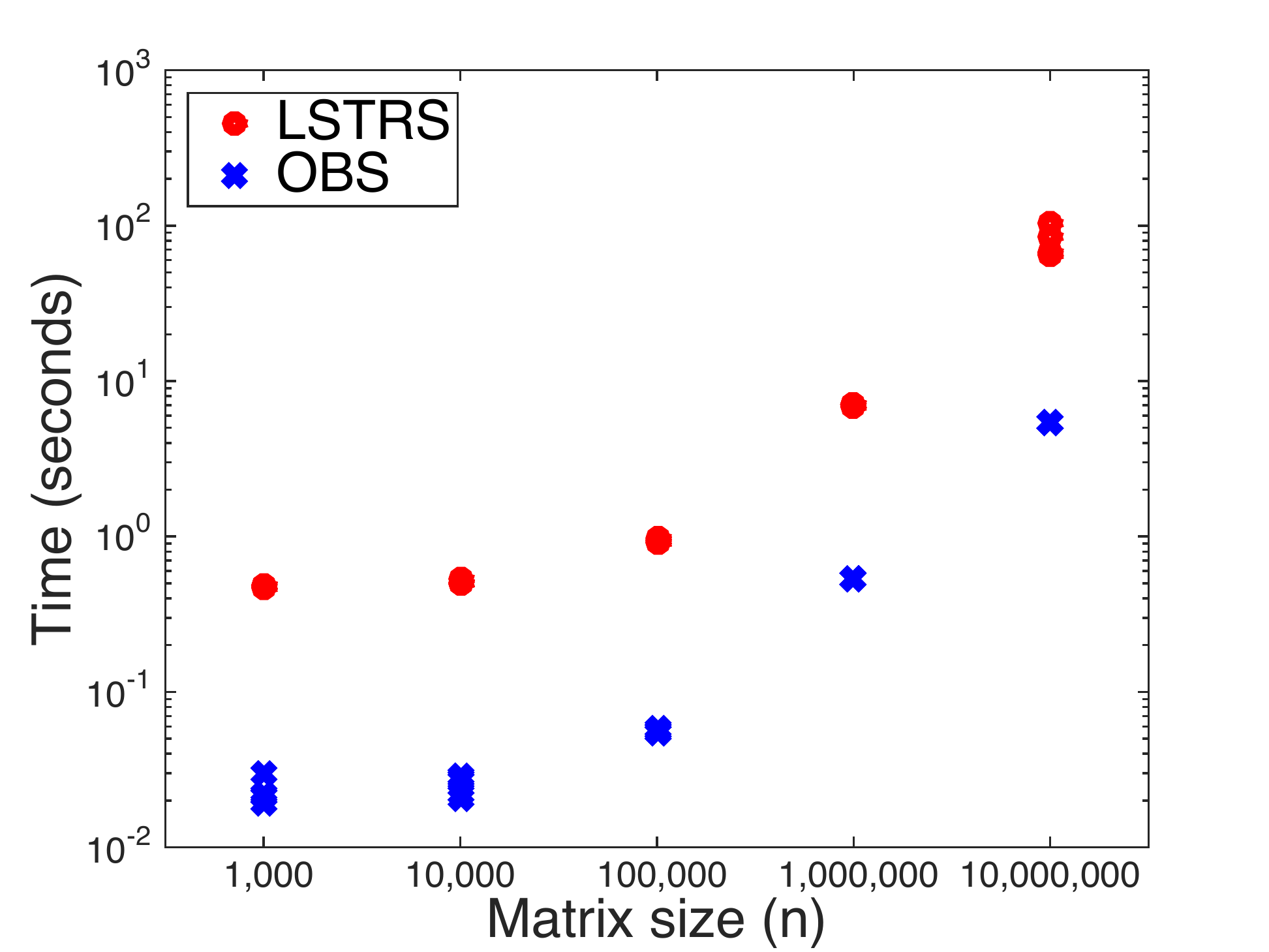} &
\includegraphics[height=3.9cm]{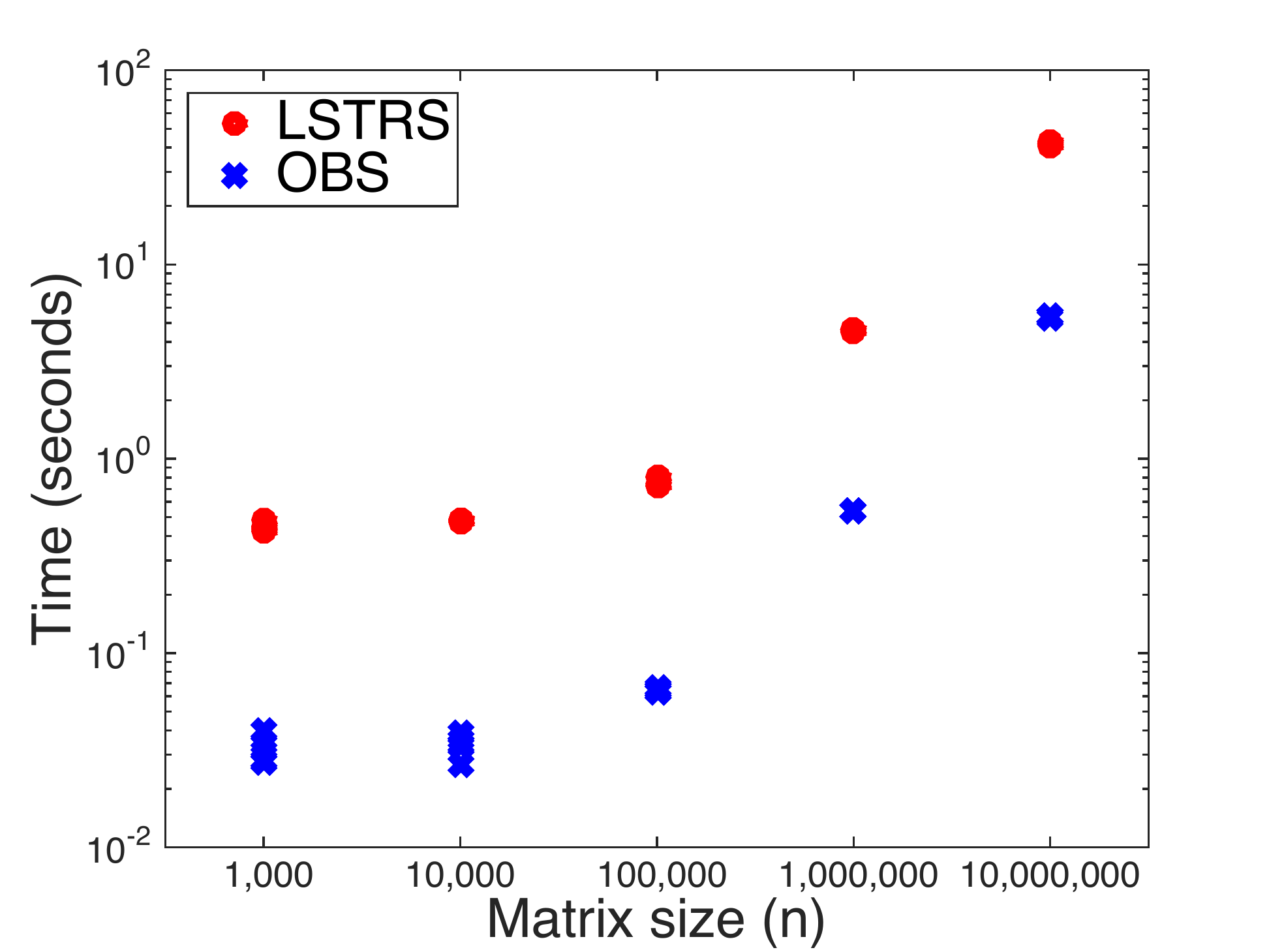} \\
Experiment 1 & Experiment 2 \\
\includegraphics[height=3.9cm]{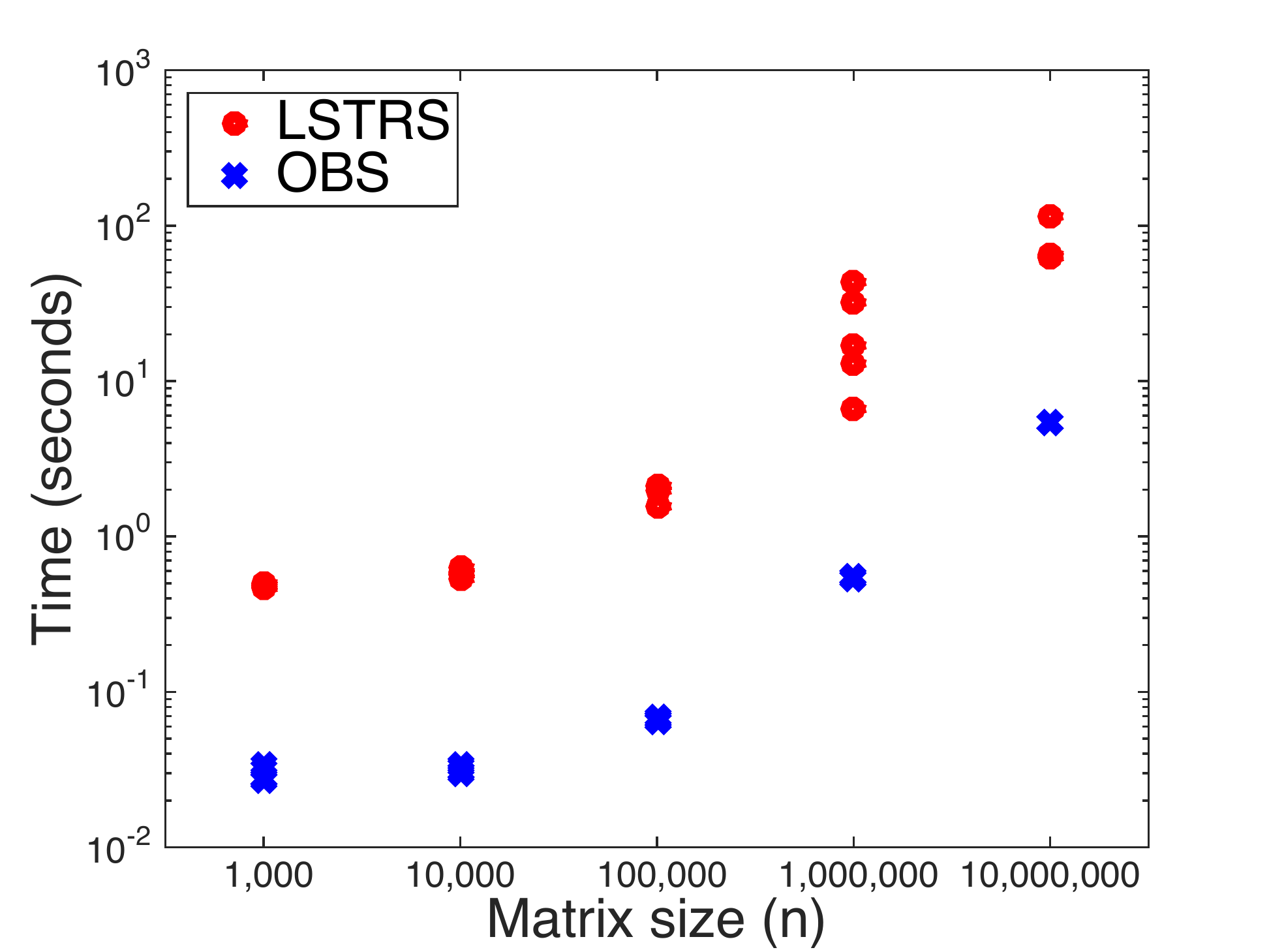} &
\includegraphics[height=3.9cm]{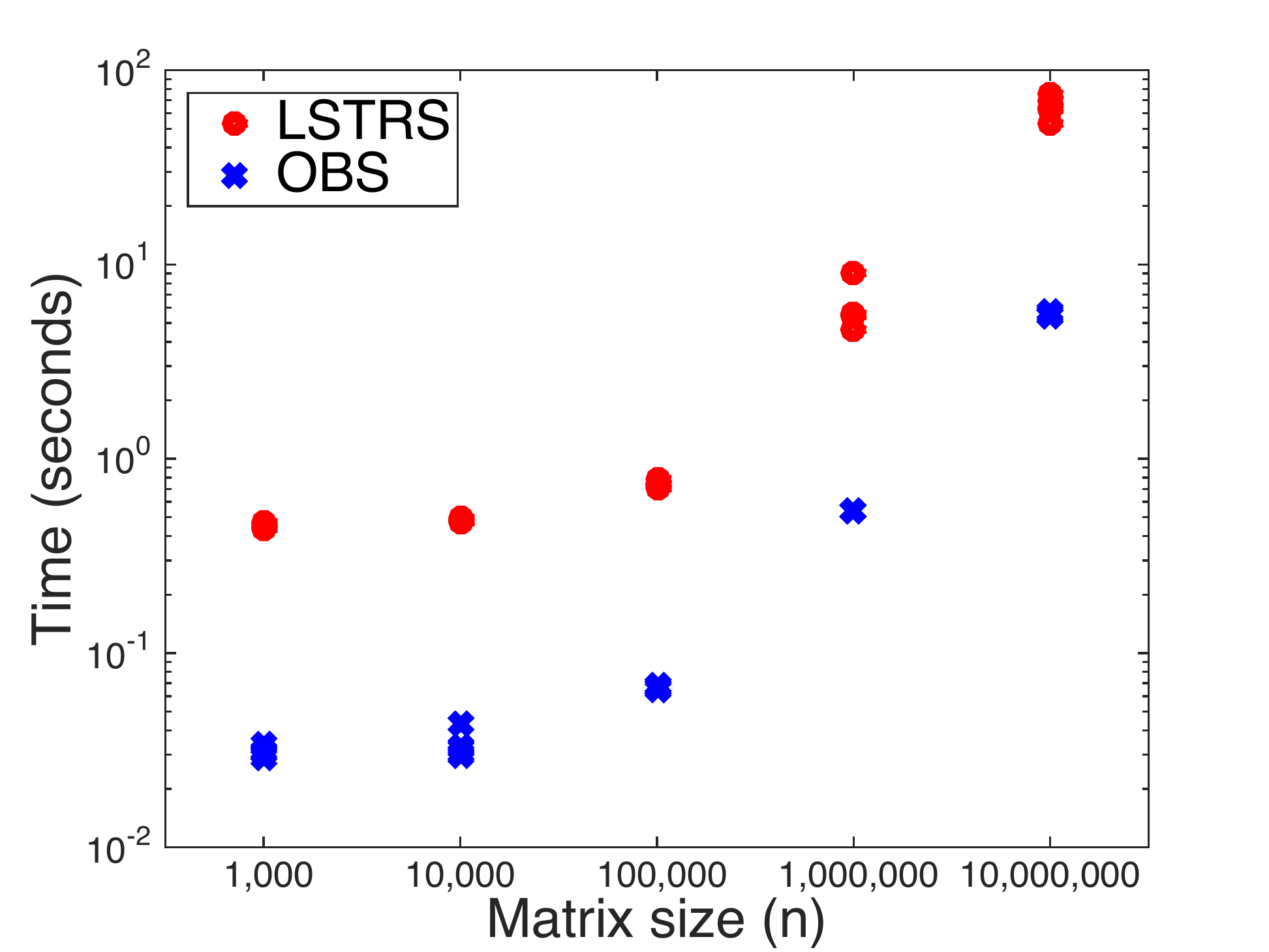} \\
Experiment 3(a) & Experiment 3(b)\\
\includegraphics[height=3.9cm]{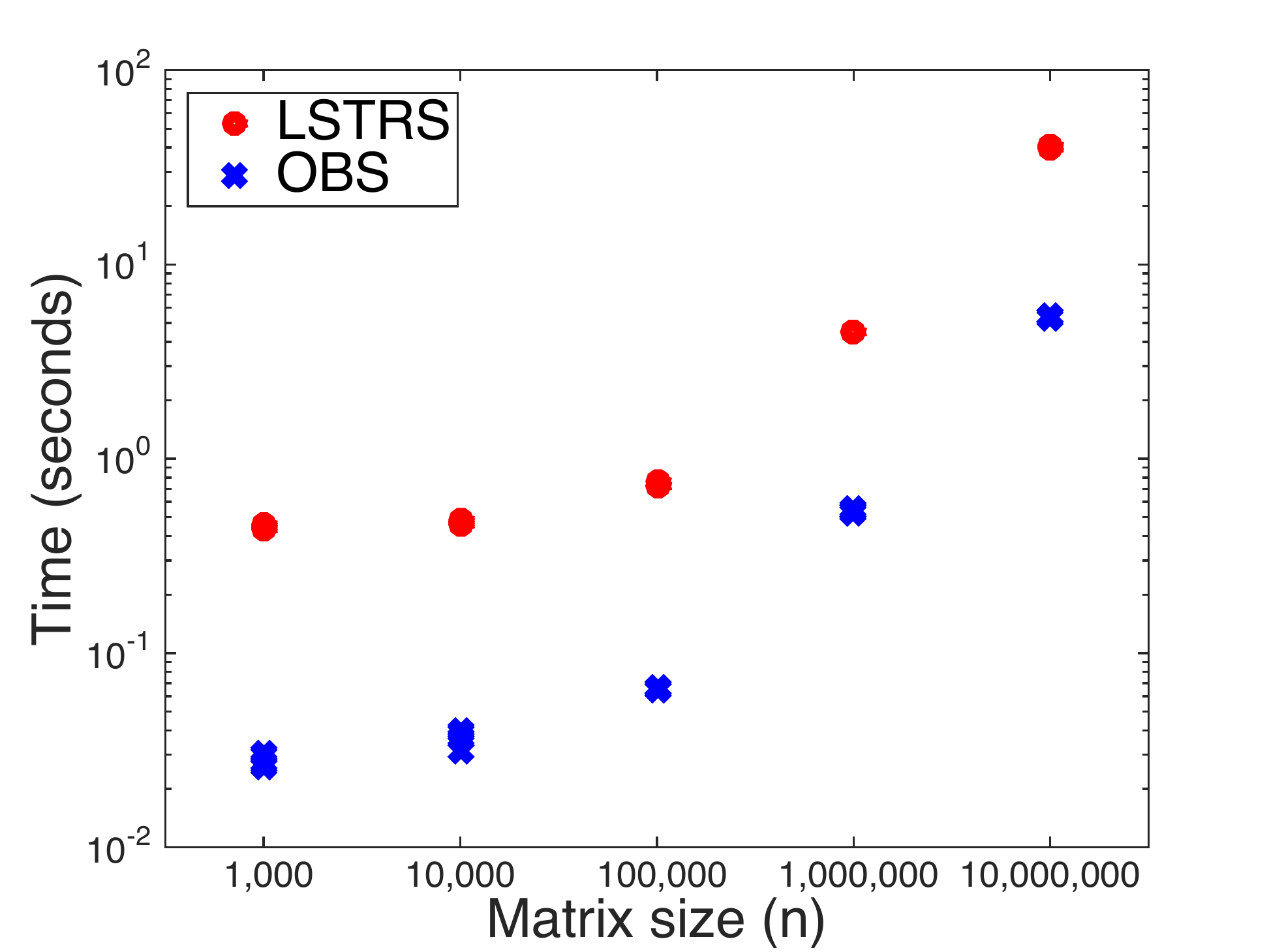} &
\includegraphics[height=3.9cm]{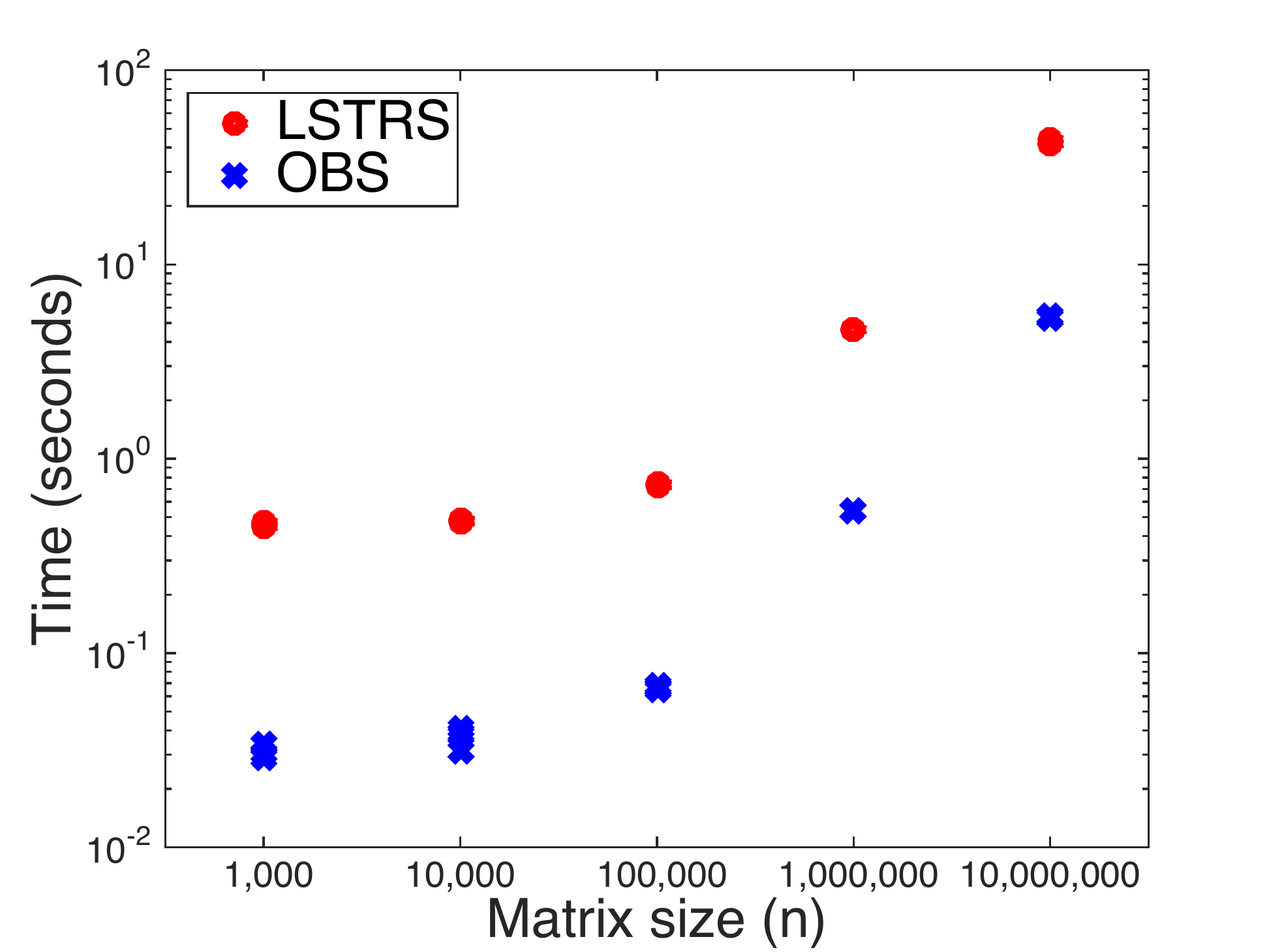} \\
Experiment 4(a) & Experiment 4(b) \\
\includegraphics[height=3.9cm]{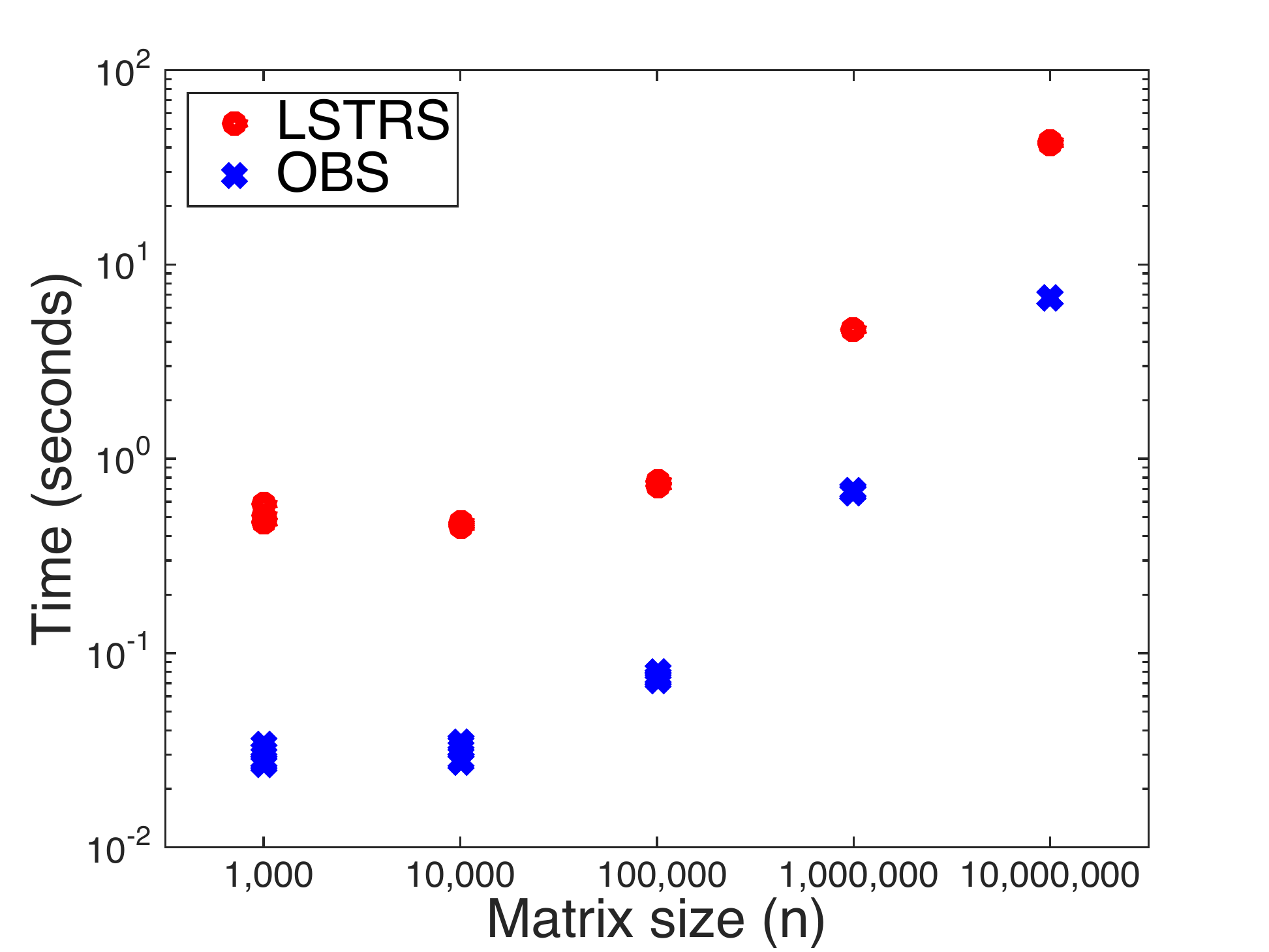} &
\includegraphics[height=3.9cm]{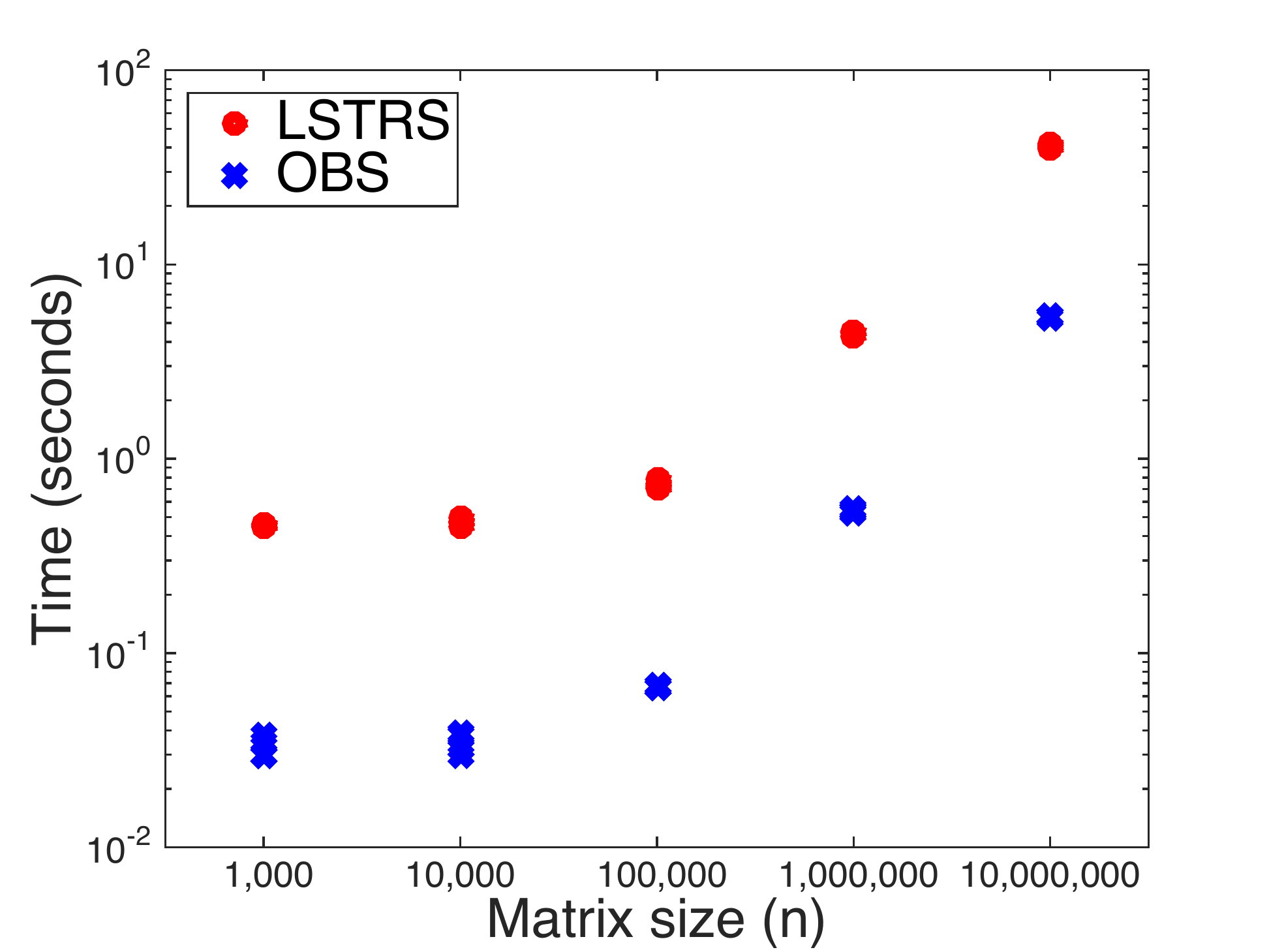} \\
Experiment 5(a) & Experiment 5(b)
\end{tabular}
\caption{Semi-log plots of the computational times (in seconds).
Each experiment was run five times; computational 
time for the LSTRS and OBS method are shown for each run. 
In all cases, the OBS method outperforms LSTRS in terms of
computational time.}
\end{figure}

\section{Concluding remarks}
In this paper, we presented the \OBS{} method, which solves trust-region
subproblems of the form (\ref{eqn-trustProblem}) where $B$ is a large \LSR{}
matrix.  The \OBS{} method uses two main strategies.  In one strategy,
$\sigma^*$ is computed from Newton's method and initialized at a point
where Newton's method is guaranteed to converge monotonically to
$\sigma^*$.  With $\sigma^*$ in hand, $p^*$ is computed directly by
formula.  For the other strategy, we propose a method that relies on an
orthonormal basis to directly compute $p^*$. (In this case, $\sigma^*$ can
be determined from the spectral decomposition of $B$.)  Numerical
experiments suggest that the \OBS{} method is able to solve large \LSR{}
trust-region subproblems to high accuracy. Moreover, the method appears
to be more robust than the \LSTRS{} method, which does not exploit
the specific structure of $B$.  In particular,
the proposed \OBS{} method achieves high accuracy in less time in all of
the experiments and in all measures of optimality than the \LSTRS{}
method.  
Future research will consider
the best implementation of the \OBS{} method in a trust-region method
(see, for example, \cite{ByrKS96}),
including initialization of $\gamma$ and rules for updating the matrices
$S$ and $Y$ containing the quasi-Newton pairs.

\section{Acknowledgments}
This research is support in part by National Science Foundation grants
CMMI-1333326 and CMMI-1334042.

\bibliographystyle{abbrv}
\bibliography{BrustErwayMarcia}

\begin{thebibliography}{10}

\bibitem{Burdakov15}
O.~Burdakov, L.~Gong, Y.-X. Yuan, and S.~Zikrin.
\newblock On efficiently combining limited memory and trust-region techniques.
\newblock Technical Report 2013:13, Linkšping University, Optimization, 2015.

\bibitem{BurWX96}
J.~V. Burke, A.~Wiegmann, and L.~Xu.
\newblock Limited memory {BFGS} updating in a trust-region framework.
\newblock Technical report, University of Washington, 1996.

\bibitem{ByrKS96}
R.~H. Byrd, H.~F. Khalfan, and R.~B. Schnabel.
\newblock Analysis of a symmetric rank-one trust region method.
\newblock {\em SIAM Journal on Optimization}, 6(4):1025--1039, 1996.

\bibitem{ByrNS94}
R.~H. Byrd, J.~Nocedal, and R.~B. Schnabel.
\newblock Representations of quasi-{N}ewton matrices and their use in
  limited-memory methods.
\newblock {\em Math. Program.}, 63:129--156, 1994.

\bibitem{ConnGT91}
A.~R. Conn, N.~I.~M. Gould, and P.~L. Toint.
\newblock Convergence of quasi-newton matrices generated by the symmetric rank
  one update.
\newblock {\em Math. Program.}, 50(2):177--195, Mar. 1991.

\bibitem{ConGT00a}
A.~R. Conn, N.~I.~M. Gould, and P.~L. Toint.
\newblock {\em Trust-Region Methods}.
\newblock Society for Industrial and Applied Mathematics (SIAM), Philadelphia,
  PA, 2000.

\bibitem{ipSSM}
J.~B. Erway and P.~E. Gill.
\newblock A subspace minimization method for the trust-region step.
\newblock {\em SIAM Journal on Optimization}, 20(3):1439--1461, 2010.

\bibitem{phasedSSM}
J.~B. Erway, P.~E. Gill, and J.~D. Griffin.
\newblock Iterative methods for finding a trust-region step.
\newblock {\em SIAM Journal on Optimization}, 20(2):1110--1131, 2009.

\bibitem{ErwayM14}
J.~B. Erway and R.~F. Marcia.
\newblock Algorithm 943: {MSS}: {MATLAB} software for {L-BFGS} trust-region
  subproblems for large-scale optimization.
\newblock {\em {ACM} Transactions on Mathematical Software}, 40(4):28:1--28:12,
  June 2014.

\bibitem{ErwaM15}
J.~B. Erway and R.~F. Marcia.
\newblock On efficiently computing the eigenvalues of limited-memory
  quasi-newton matrices.
\newblock {\em SIAM Journal on Matrix Analysis and Applications},
  36(3):1338--1359, 2015.

\bibitem{Gay81}
D.~M. Gay.
\newblock Computing optimal locally constrained steps.
\newblock {\em SIAM J. Sci. Statist. Comput.}, 2(2):186--197, 1981.

\bibitem{GrNaS09}
I.~Griva, S.~G. Nash, and A.~Sofer.
\newblock {\em Linear and Nonlinear Optimization}.
\newblock Society for Industrial and Applied Mathematics, Philadelphia, 2009.

\bibitem{Hag01}
W.~W. Hager.
\newblock Minimizing a quadratic over a sphere.
\newblock {\em SIAM J. Optim.}, 12(1):188--208, 2001.

\bibitem{HagP04}
W.~W. Hager and S.~Park.
\newblock Global convergence of {SSM} for minimizing a quadratic over a sphere.
\newblock {\em Math. Comp.}, 74(74):1413--1423, 2004.

\bibitem{KelS98}
C.~Kelley and E.~Sachs.
\newblock Local convergence of the symmetric rank-one iteration.
\newblock {\em Computational Optimization and Applications}, 9(1):43--63, 1998.

\bibitem{KhaBS93}
H.~F. Khalfan, R.~H. Byrd, and R.~B. Schnabel.
\newblock A theoretical and experimental study of the symmetric rank-one
  update.
\newblock {\em SIAM Journal on Optimization}, 3(1):1--24, 1993.

\bibitem{KinC02}
D.~R. Kincaid and E.~W. Cheney.
\newblock {\em Numerical analysis: mathematics of scientific computing},
  volume~2.
\newblock American Mathematical Soc., 2002.

\bibitem{MorS83}
J.~J. Mor\'{e} and D.~C. Sorensen.
\newblock Computing a trust region step.
\newblock {\em SIAM J. Sci. and Statist. Comput.}, 4:553--572, 1983.

\bibitem{NocW99}
J.~Nocedal and S.~J. Wright.
\newblock {\em Numerical Optimization}.
\newblock Springer-Verlag, New York, 1999.

\bibitem{rojas2001}
M.~Rojas, S.~A. Santos, and D.~C. Sorensen.
\newblock A new matrix-free algorithm for the large-scale trust-region
  subproblem.
\newblock {\em SIAM Journal on optimization}, 11(3):611--646, 2001.

\bibitem{Rojas2008}
M.~Rojas, S.~A. Santos, and D.~C. Sorensen.
\newblock Algorithm 873: Lstrs: Matlab software for large-scale trust-region
  subproblems and regularization.
\newblock {\em ACM Trans. Math. Softw.}, 34(2):11:1--11:28, Mar. 2008.

\bibitem{SunY06}
W.~Sun and Y.-X. Yuan.
\newblock {\em Optimization theory and methods: nonlinear programming},
  volume~1.
\newblock Springer Science \& Business Media, 2006.

\bibitem{Wol94}
H.~Wolkowicz.
\newblock Measures for symmetric rank-one updates.
\newblock {\em Mathematics of Operations Research}, 19(4):815--830, 1994.

\end{thebibliography}

\end{document}